\theoremstyle{problems}
\newtheorem{theorem}{Theorem}[section]
\newtheorem{proposition}[theorem]{Proposition}
\newtheorem{lemma}[theorem]{Lemma}
\newtheorem{corollary}[theorem]{Corollary}
\theoremstyle{definition}
\newtheorem{example}[theorem]{Example}
\newtheorem{construction}[theorem]{Construction}
\theoremstyle{remark}
\newtheorem*{remark}{Remark}
\numberwithin{equation}{section}
\renewcommand{\phi}{\varphi}
\def\R{\mathbb R}
\def\Z{\mathbb Z}
\def\sK{\mathcal K}
\newcommand{\mb}[1]{{\textbf {\textit#1}}}
\renewcommand{\ge}{\geqslant}
\newcommand{\Ker}{\mathop{\rm Ker}}
\newcommand{\rank}{\mathop{\mathrm{rank}}}
\def\raag{\mbox{\it RA\/}}
\def\racg{\mbox{\it RC\/}}
\newcommand{\cat}[1]{\mbox{\sc #1}}
\newcommand{\scat}[1]{\mbox{\scriptsize{\sc #1}}}
\def\ca{{\text{\sc cat}}}
\def\top{\text{\sc top}}
\def\colim{\mathop\mathrm{colim}\nolimits}
\def\lim{\mathop\mathrm{lim}\nolimits}
\def\hocolim{\mathop\mathrm{hocolim}\nolimits}
\newcommand{\fw}{\mathop{\mbox{\it FW\/}}}
\newcommand{\zk}{\mathcal Z_{\mathcal K}}
\newcommand{\lk}{\mathcal L_{\mathcal K}}
\newcommand{\rk}{\mathcal R_{\mathcal K}}
\def\pt{\mathit{pt}}
\begin{document}

\title[Polyhedral products and commutator subgroups]{Polyhedral
products and commutator subgroups of right-angled Artin and Coxeter groups}

\author[Taras Panov]{Taras Panov}
\address{Department of Mathematics and Mechanics, Moscow
State University, Leninskie Gory, 119991 Moscow, Russia,
\newline\indent Institute for Theoretical and Experimental Physics,
Moscow, Russia \quad \emph{and}
\newline\indent Institute for Information Transmission Problems,
Russian Academy of Sciences} \email{tpanov@mech.math.msu.su}
\urladdr{http://higeom.math.msu.su/people/taras/}

\author{Yakov Veryovkin}
\address{Department of Mathematics and Mechanics, Moscow
State University, Leninskie Gory, 119991 Moscow, Russia,
\newline\indent Steklov Mathematical Institute, Russian Academy of Sciences}
\email{verevkin\_j.a@mail.ru}

\thanks{The research of the first author was carried out at the
IITP RAS and supported by the Russian Science Foundation grant
no.~14-50-00150. The research of the second author was supported
by the Russian Foundation for Basic Research grant
no.~14-01-00537.}

\subjclass[2010]{20F65, 20F12, 57M07}

\keywords{Right-angled Artin group, right-angled Coxeter group,
graph product, commutator subgroup, polyhedral product}

\begin{abstract}
We construct and study polyhedral product models for classifying
spaces of right-angled Artin and Coxeter groups, general graph
product groups and their commutator subgroups. By way of
application, we give a criterion of freeness for the commutator
subgroup of a graph product group, and provide an explicit minimal
set of generators for the commutator subgroup of a right-angled
Coxeter group.
\end{abstract}

\dedicatory{To the memory of Rainer Vogt}

\maketitle

\section{Introduction}
Right-angled Artin and Coxeter groups are familiar objects in
geometric group theory~\cite{davi08}. From the abstract
categorical viewpoint, they are particular cases of \emph{graph
product groups}, corresponding to a sequence of $m$ groups $\mb
G=(G_1,\ldots,G_m)$ and a graph~$\Gamma$ on $m$ vertices.
Informally, the graph product group $\mb G^\Gamma$ consists of
words with letters from $G_1,\ldots,G_m$ in which the elements of
$G_i$ and $G_j$ with $i\ne j$ commute whenever $\{i,j\}$ is an
edge of~$\Gamma$. The graph product group $\mb G^\Gamma$
interpolates between the free product $G_1\star\cdots\star G_m$
(corresponding to a graph consisting of $m$ disjoint vertices) and
the cartesian product $G_1\times\cdots\times G_m$ (corresponding
to a complete graph). Right-angled Artin and Coxeter groups
$\raag_\Gamma$ and $\racg_\Gamma$ correspond to the cases $G_i=\Z$
and $G_i=\Z_2$, respectively.

The \emph{polyhedral product} is a functorial
combinatorial-topological construction assigning a topological
space $(\mb X,\mb A)^\sK$ to a sequence of $m$ pairs of
topological spaces $(\mb X,\mb A)=\{(X_1,A_1),\ldots,(X_m,A_m)\}$
and a simplicial complex $\sK$ on $m$
vertices~\cite{bu-pa00,b-b-c-g10,bu-pa15}. It generalises the
notion of a \emph{moment-angle complex} $\zk=(D^2,S^1)^\sK$, which
is a key object of study in toric topology. Polyhedral products
also provide a unifying framework for several constructions of
classifying spaces for right-angled Artin and Coxeter groups,
their commutator subgroups, as well as general graph products
groups. The description of the classifying spaces of graph product
groups and their commutator subgroups was implicit
in~\cite{p-r-v04}, where the canonical homotopy fibration
\[
  (E\mb G,\mb G)^\sK\longrightarrow (B\mb
  G)^\sK\longrightarrow\prod_{k=1}^m BG_k.
\]
of polyhedral products was introduced and studied.

To each graph $\Gamma$ without loops and double edges one can
assign a \emph{flag} simplicial  complex $\sK$, whose simplices
are the vertex sets of complete subgraphs (or \emph{cliques})
of~$\Gamma$. For any flag complex $\sK$ the polyhedral product
$(B\mb G)^\sK$ is the classifying space for the corresponding
graph product group $\mb G^\sK=\mb G^\Gamma$, while $(E\mb G,\mb
G)^\sK$ is the classifying space for the commutator subgroup
of~$\mb G^\sK$. In the case of right-angled Artin group
$\raag_\Gamma=\raag_\sK$, each $BG_i=B\Z$ is a circle, so we
obtain as $(B\mb G)^\sK$ the subcomplex $(S^1)^\sK$ in an
$m$-torus introduced by Kim and Roush in~\cite{ki-ro80}. In the
case of right-angled Coxeter group $\racg_\sK$, each $BG_i=B\Z_2$
is an infinite real projective space~$\R P^\infty$, so the
classifying space for $\racg_\sK$ is a similarly defined
subcomplex $(\R P^\infty)^\sK$ in the $m$-fold product of~$\R
P^\infty$. The classifying space for the commutator subgroup of
$\racg_\sK$ is a finite cubic subcomplex $\rk$ in an
$m$-dimensional cube, while the classifying space for the
commutator subgroup of $\raag_\sK$ is an infinite cubic subcomplex
$\lk$ in the $m$-dimensional cubic lattice. All these facts are
summarised in Theorem~\ref{gpfund} and Corollaries~\ref{artfund}
and~\ref{coxfund}.

The emphasis of~\cite{p-r-v04} was on properties of graph products
of topological (rather than discrete) groups, as part of the
homotopy-theoretical study of toric spaces and their loop spaces.
In the present work we concentrate on the study of the commutator
subgroups for discrete graph product groups. Apart from a purely
algebraic interest, our motivation lies in the fact that the
commutator subgroups of graph products are the fundamental groups
of very interesting aspherical spaces. From this topological
perspective, right-angled Coxeter groups $\racg_\sK$ are the most
interesting. The commutator subgroup $\racg_\sK'$ is $\pi_1(\rk)$
for a finite-dimensional aspherical complex $\rk$, which turns out
to be a manifold when $\sK$ is a simplicial subdivision of sphere.
When $\sK$ is a cycle (the boundary of a polygon) or a
triangulated $2$-sphere, one obtains as $\racg_\sK'$ a surface
group or a $3$-manifold group respectively. These groups have
attracted much attention recently in geometric group theory and
low-dimensional topology. The manifolds $\rk$ corresponding to
(the dual complexes of) higher-dimensional \emph{permutahedra} and
\emph{graph-associahedra} also feature as the \emph{universal
realisators} in the works of Gaifullin~\cite{gaif13},~\cite{gaif}
on the problem of realisation of homology classes by manifolds.

In Theorem~\ref{ragws} we give a simple criterion for the
commutator subgroup of a graph product group to be free. In the
case of right-angled Artin groups this result was obtained by
Servatius, Droms and Servatius in~\cite{s-d-s89}. In
Theorem~\ref{gscox} we provide an explicit minimal generator set
for the finitely generated commutator subgroup of a right-angled
Coxeter group~$\racg_\sK$. This generator set consists of nested
iterated commutators of the canonical generators of~$\racg_\sK$
which appear in a special order determined by the combinatorics
of~$\sK$.

Theorems~\ref{ragws} and Theorem~\ref{gscox} parallel the
corresponding results obtained in~\cite{g-p-t-w16} for the loop
homology algebras and rational homotopy Lie algebras of
\emph{moment-angle complexes}. Algebraically, these results
of~\cite{g-p-t-w16} can be interpreted as a description of the
commutator subalgebra in a special graph product graded Lie
algebra (see Theorem~\ref{multgen}). The results of Section~4 in
the current paper constitute a group-theoretic analogue of the
results of~\cite{g-p-t-w16} for graded associative and Lie
algebras.

\medskip

We dedicate this article to the memory of Rainer Vogt, who shared
his great knowledge and ideas with T.\,P. during insightful
collaboration in the 2000s.

The authors are grateful to Alexander Gaifullin for his invaluable
comments and suggestions, which much helped in making the text
more accessible.

\section{Preliminaries}
We consider a finite ordered set $ [m] = \{1,2, \dots, m \} $ and
its subsets $I=\{i_1,\ldots,i_k\}\subset[m]$, where $I$ can be
empty of the whole of~$[m]$.

Let $\sK$ be an (abstract) \emph{simplicial complex} on $[m]$,
i.\,e. $ \mathcal K$ is a collection of subsets of $[m]$ such that
for any $I\in\sK$ all subsets of $I$ also belong to~$\sK$. We
always assume that the empty set $\varnothing$ and all one-element
subsets $\{i\}\subset[m]$ belong to~$\sK$. We refer to $ I \in
\mathcal K $ as a \emph{simplex} (or a \emph{face}) of~$\sK$.
One-element faces are \emph{vertices}, and two-element faces are
\emph{edges}. Every abstract simplicial complex $\sK$ has a
\emph{geometric realisation} $|\sK|$, which is a polyhedron in a
Euclidean space (a union of convex geometric simplices). In all
subsequent constructions it will be useful to keep in mind the
geometric object~$|\sK|$ alongside with the abstract
collection~$\sK$.

We recall the construction of the polyhedral product
(see~\cite{bu-pa00, b-b-c-g10, bu-pa15}).

\begin{construction}[polyhedral product]\label{polpr}
Let $\sK$ be a simplicial complex on~$[m]$ and let
\[
  (\mb X,\mb A)=\{(X_1,A_1),\ldots,(X_m,A_m)\}
\]
be a sequence of $m$ pairs of pointed topological spaces, $\pt\in
A_i\subset X_i$, where $\pt$ denotes the basepoint. For each
subset $I\subset[m]$ we set
\begin{equation}\label{XAI}
  (\mb X,\mb A)^I=\bigl\{(x_1,\ldots,x_m)\in
  \prod_{k=1}^m X_k\colon\; x_k\in A_k\quad\text{for }k\notin I\bigl\}
\end{equation}
and define the \emph{polyhedral product} of $(\mb X,\mb A)$
corresponding to $\sK$ as
\[
  (\mb X,\mb A)^{\sK}=\bigcup_{I\in\mathcal K}(\mb X,\mb A)^I=
  \bigcup_{I\in\mathcal K}
  \Bigl(\prod_{i\in I}X_i\times\prod_{i\notin I}A_i\Bigl).
\]
In the case when all pairs $(X_i,A_i)$ are the same, i.\,e.
$X_i=X$ and $A_i=A$ for $i=1,\ldots,m$, we use the notation
$(X,A)^\sK$ for $(\mb X,\mb A)^\sK$. Also, if each $A_i=\pt$, then
we use the abbreviated notation $\mb X^\sK$ for $(\mb X,\pt)^\sK$,
and $X^\sK$ for $(X,\pt)^\sK$.

This construction of the polyhedral product has the following
categorical interpretation. Consider the \emph{face category}
$\ca(\sK)$, whose objects are simplices $I\in\sK$ and morphisms
are inclusions $I\subset J$. Let $\top$ denote the category of
topological spaces. Define a $\ca{(\sK)}$-diagram (a covariant
functor from the small category $\ca(\sK)$ to the ``large''
category $\top$)
\begin{equation}\label{diagK}
\begin{aligned}
  \mathcal D_\sK(\mb X,\mb A)\colon \ca(\sK)&\longrightarrow \top,\\
  I&\longmapsto (\mb X,\mb A)^I,
\end{aligned}
\end{equation}
which maps the morphism $I\subset J$ of $\ca(\sK)$ to the
inclusion of spaces $(\mb X,\mb A)^I\subset(\mb X,\mb A)^J$. Then
we have
\begin{equation}\label{ppcolim}
  (\mb X,\mb A)^\sK=\mathop{\mathrm{colim}}
  \mathcal D_\sK(\mb X,\mb A)=\mathop{\mathrm{colim}}_{I\in\sK}
  (\mb X,\mb A)^I.
\end{equation}
Here $\colim$ denotes the \emph{colimit functor} (also known as
the \emph{direct limit functor}) from the category of
$\ca{(\sK)}$-diagrams of topological spaces to the
category~$\cat{top}$. By definition, $\colim$ is the left adjoint
to the constant diagram functor. The details of these
constructions can be found, e.\,g., in~\cite[Appendix~C]{bu-pa15}.

\end{construction}

Given a subset~$J\subset[m]$, consider the restriction of $\sK$
to~$J$:
\[
  \sK_J=\{I\in\sK\colon I\subset J\},
\]
which is also known as a \emph{full subcomplex} of~$\sK$. Recall
that a subspace $Y\subset X$ is called a \emph{retract} of~$X$ if
there exists a continuous map $r\colon X\to Y$ such that the
composition $Y\hookrightarrow X\stackrel r\longrightarrow Y$ is
the identity. We record the following simple property of the
polyhedral product.

\begin{proposition}\label{ppret}
$(\mb X,\mb A)^{\sK_J}$ is a retract of $(\mb X,\mb A)^{\sK}$
whenever $\sK_J\subset\sK$ is a full subcomplex.
\end{proposition}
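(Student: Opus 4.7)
The natural idea is to construct an explicit coordinate-wise retraction using the basepoints. I would define $r \colon \prod_{k=1}^m X_k \to \prod_{k=1}^m X_k$ by sending $(x_1,\ldots,x_m)$ to $(y_1,\ldots,y_m)$ with $y_k = x_k$ for $k \in J$ and $y_k = \pt$ for $k \notin J$; this is continuous as a product of continuous coordinate maps.

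The core step is checking that $r$ carries $(\mb X,\mb A)^\sK$ into $(\mb X,\mb A)^{\sK_J}$. Given $x \in (\mb X,\mb A)^I$ with $I \in \sK$, the set $I \cap J$ is a subset of $I$, so $I \cap J \in \sK$; since $I \cap J \subset J$, fullness --- here in the form of the defining identity $\sK_J = \{I \in \sK \colon I \subset J\}$ --- places $I \cap J$ in $\sK_J$. A short coordinate check then shows $r(x) \in (\mb X,\mb A)^{I \cap J}$: for $i \notin I \cap J$, either $i \notin J$ and then $y_i = \pt \in A_i$, or $i \in J \setminus I$ and then $y_i = x_i \in A_i$ because $i \notin I$.

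It then remains to verify that $r$ restricts to the identity on $(\mb X,\mb A)^{\sK_J}$. Under the standard convention whereby $\sK_J$ is regarded as a simplicial complex on the vertex set $J$ and the inclusion $(\mb X,\mb A)^{\sK_J} \hookrightarrow (\mb X,\mb A)^\sK$ fills in the basepoint on the coordinates outside $J$, this is immediate: each $y_k$ for $k \notin J$ is already $\pt$, and so $r$ acts as the identity on the embedded image.

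The main subtlety lies precisely in this convention. If one instead took $(\mb X,\mb A)^{\sK_J}$ as a subspace of $\prod_{k=1}^m X_k$ in the naive way, its coordinates outside $J$ would only be required to lie in $A_k$ rather than to equal $\pt$, and the coordinate-wise $r$ would not literally be the identity. Beyond this bookkeeping point there is no genuine obstacle: the combinatorial content of the argument is that $I \mapsto I \cap J$ lands in $\sK_J$, which is exactly the fullness hypothesis, and the topological content reduces to continuity of a product of constant maps and identities.
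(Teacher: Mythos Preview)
Your proposal is correct and follows essentially the same approach as the paper: both construct the retraction as the coordinate-wise map that keeps the $J$-coordinates and collapses the rest to the basepoint, and both verify it lands in $(\mb X,\mb A)^{\sK_J}$ via the observation that $I\cap J\in\sK_J$ for every $I\in\sK$. The only cosmetic difference is that the paper phrases the map as a patching of projections $r_I$ over $I\in\sK$, whereas you define it globally on the ambient product and restrict; your explicit discussion of the basepoint convention for the inclusion $(\mb X,\mb A)^{\sK_J}\hookrightarrow(\mb X,\mb A)^{\sK}$ is a welcome clarification.
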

\begin{proof}
We have
\[
  (\mb X,\mb A)^{\sK}=\bigcup_{I\in\mathcal K}
  \Bigl(\prod_{i\in I}X_i\times\prod_{i\in[m]\setminus I}A_i\Bigl),\quad
  (\mb X,\mb A)^{\sK_J}=\bigcup_{I\in\mathcal K,\,I\subset J}
  \Bigl(\prod_{i\in I}X_i\times\prod_{i\in J\setminus I}A_i\Bigl).
\]
Since each $A_i$ is a pointed space, there is a canonical
inclusion $(\mb X,\mb A)^{\sK_J}\hookrightarrow(\mb X,\mb
A)^{\sK}$. Furthermore, for each $I\in\sK$ there is a projection
\[
  r_I\colon \prod_{i\in I}X_i\times\prod_{i\in[m]\setminus
  I}A_i\longrightarrow
  \prod_{i\in I\cap J}X_i\times\prod_{i\in J\setminus I}A_i.
\]
Since $\sK_J$ is a full subcomplex, the image of $r_I$ belongs to
$(\mb X,\mb A)^{\sK_J}$. The projections $r_I$ patch together to
give a retraction $r=\bigcup_{I\in\sK}r_I\colon(\mb X,\mb
A)^\sK\to(\mb X,\mb A)^{\sK_J}$.
\end{proof}

The following examples of polyhedral products feature throughout
the paper.

\begin{example}\label{ppexa}\

\textbf{1.} Let $(X,A)=(S^1,\pt)$, where $S^1$ is a circle. The
corresponding polyhedral product $(S^1)^\sK$ is a subcomplex in
the $m$-torus~$(S^1)^m$:
\begin{equation}\label{s1k}
  (S^1)^\sK=\bigcup_{I\in\sK}(S^1)^I\subset (S^1)^m.
\end{equation}
In particular, when $\sK=\{\varnothing,\{1\},\ldots,\{m\}\}$
(which is $m$ disjoint points geometrically), the polyhedral
product $(S^1)^\sK$ is the wedge $(S^1)^{\vee m}$ of $m$ circles.

When $\sK$ consists of all proper subsets of~$[m]$ (which
geometrically corresponds to the boundary
$\partial\varDelta^{m-1}$ of an $(m-1)$-dimensional simplex),
$(S^1)^\sK$ is known as the \emph{fat wedge} of $m$ circles; it is
obtained by removing the top-dimensional cell from the standard
cell decomposition of an $m$-torus~$(S^1)^m$.

For a general $\sK$ on $m$ vertices, $(S^1)^\sK$ sits between the
$m$-fold wedge $(S^1)^{\vee m}$ and the $m$-fold cartesian
product~$(S^1)^m$.

\smallskip

\textbf{2.} Let $(X,A)=(\R,\Z)$, where $\Z$ is the set of integer
points on a real line~$\R$. We denote the corresponding polyhedral
product by~$\lk$:
\begin{equation}\label{lk}
  \lk=(\R,\Z)^\sK=\bigcup_{I\in\sK}(\R,\Z)^I\subset \R^m.
\end{equation}
When $\sK$ consists of $m$ disjoint points, $\lk$ is a grid in
$m$-dimensional space $\R^m$ consisting of all lines parallel to
one of the coordinate axis and passing though integer points. When
$\sK=\partial\varDelta^{m-1}$, the complex $\lk$ is the union of
all integer hyperplanes parallel to coordinate hyperplanes.

\smallskip

\textbf{3.} Let $(X,A)=(\R P^\infty,\pt)$, where $\R P^\infty$ is
an infinite-dimensional real projective space, which is also the
classifying space $B\Z_2$ for the 2-element cyclic group~$\Z_2$.
Consider the polyhedral product
\begin{equation}\label{rpk}
  (\R P^\infty)^\sK=\bigcup_{I\in\sK}(\R P^\infty)^I\subset (\R P^\infty)^m.
\end{equation}
Similarly to the first example above, $(\R P^\infty)^\sK$ sits
between the $m$-fold wedge $(\R P^\infty)^{\vee m}$ (corresponding
to $\sK$ consisting of $m$ points) and the $m$-fold cartesian
product $(\R P^\infty)^{m}$ (corresponding to
$\sK=\varDelta^{m-1}$).

\smallskip

\textbf{4.} Let $(X,A)=(D^1,S^0)$, where $D^1$ is a closed
interval (a convenient model is the segment $[-1,1]$) and $S^0$ is
its boundary, consisting of two points. The polyhedral product
$(D^1,S^0)^\sK$ is known as the \emph{real moment-angle
complex}~\cite[\S3.5]{bu-pa00},~\cite{bu-pa15} and is denoted
by~$\rk$:
\begin{equation}\label{rk}
  \rk=(D^1,S^0)^\sK=\bigcup_{I\in\sK}(D^1,S^0)^I.
\end{equation}
It is a cubic subcomplex in the $m$-cube $(D^1)^m=[-1,1]^m$. When
$\sK$ consists of $m$ disjoint points, $\rk$ is the 1-dimensional
skeleton of the cube~$[-1,1]^m$. When
$\sK=\partial\varDelta^{m-1}$, $\rk$ is the boundary of the
cube~$[-1,1]^m$. In general, if $\{i_1,\ldots,i_k\}$ is a face of
$\sK$, then $\rk$ contains $2^{m-k}$ cubic faces of dimension $k$
which lie in the $k$-dimensional planes parallel to the
$\{i_1,\ldots,i_k\}$th coordinate plane.

The space $\rk$ was introduced and studied in the works of
Davis~\cite{davi83} and Davis--Januszkiewicz~\cite{da-ja91},
although their construction was different. When $|\sK|$ is
homeomorphic to a sphere, $\rk$ is a topological manifold (this
follows from the results of~\cite{davi83}, see also~\cite{bu-pa15}
and~\cite[Theorem~2.3]{cai}). Furthermore, the manifold $\rk$ has
a smooth structure when $|\sK|$ is the boundary of a convex
polytope. In this case $\rk$ is the \emph{universal abelian cover}
of the dual simple polytope~$P$~\cite[\S4.1]{da-ja91}.
\end{example}

The four polyhedral products above are related by the two homotopy
fibrations~\cite{p-r-v04},~\cite[\S4.3]{bu-pa15}
\begin{gather}
\lk\longrightarrow(S^1)^\sK\longrightarrow(S^1)^m,\label{artfib}\\
\rk\longrightarrow(\R P^\infty)^\sK\longrightarrow(\R
P^\infty)^m.\label{coxfib}
\end{gather}

\begin{construction}[right-angled Artin and Coxeter group]
Let $\Gamma$ be a graph on $m$ vertices. We write
$\{i,j\}\in\Gamma$ when $\{i,j\}$ is an edge. Denote by
$F(g_1,\ldots,g_m)$ a free group with $m$ generators corresponding
to the vertices of~$\Gamma$. The \emph{right-angled Artin group}
$\raag_\Gamma$ corresponding to~$\Gamma$ is defined by generators
and relations as follows:
\begin{equation}\label{raag}
\raag_\Gamma = F(g_1,\ldots,g_m)\big/ (g_ig_j=g_jg_i\text{ for
}\{i,j\}\in\Gamma).
\end{equation}
When $\Gamma$ is a complete graph we have $\raag_\Gamma=\Z^m$,
while when $\Gamma$ has no edges we obtain the free group.

The \emph{right-angled Coxeter group} $\racg_\Gamma$ is defined as
\begin{equation}\label{racg}
\racg_\Gamma = F(g_1,\ldots,g_m)\big/ (g_i^2=1,\;
g_ig_j=g_jg_i\text{ for }\{i,j\}\in\Gamma).
\end{equation}

Both right-angled Artin and Coxeter groups have a categorical
interpretation similar to that of polyhedral products
(see~\eqref{ppcolim}). Namely, consider the following
$\ca{(\sK)}$-diagrams, this time in the category $\cat{grp}$ of
groups:
\[
\begin{aligned}
  \mathcal D_\sK(\Z)\colon
  \ca(\sK)&\longrightarrow\cat{grp},\quad&
  I\longmapsto \Z^I,\\
  \mathcal D_\sK(\Z_2)\colon
  \ca(\sK)&\longrightarrow\cat{grp},\quad&
  I\longmapsto \Z_2^I,
\end{aligned}
\]
where $\Z^I=\prod_{i\in I}\Z$ and $\Z_2^I=\prod_{i\in I}\Z_2$. A
morphism $I\subset J$ of $\ca(\sK)$ is mapped to the monomorphism
of groups $\Z^I\to\Z^J$ and $\Z_2^I\to\Z_2^J$ respectively. Then
\begin{equation}\label{racolimit}
\begin{aligned}
\raag_{\sK^1}=\colim^{\scat{grp}}\mathcal
D_\sK(\Z)=\colim^{\scat{grp}}_{I\in\sK}\Z^I,\\
\racg_{\sK^1}=\colim^{\scat{grp}}\mathcal
D_\sK(\Z_2)=\colim^{\scat{grp}}_{I\in\sK}\Z_2^I,
\end{aligned}
\end{equation}
where $\sK^1$ denotes the 1-skeleton of~$\sK$, which is a graph.
Here $\colim^{\scat{grp}}$ denotes the colimit functor in
$\cat{grp}$.
\end{construction}

A \emph{missing face} (or a \emph{minimal non-face}) of $\sK$ is a
subset $I\subset[m]$ such that $I$ is not a simplex of~$\sK$, but
every proper subset of $I$ is a simplex of~$\sK$. A simplicial
complex $\sK$ is called a \emph{flag complex} if each of its
missing faces consists of two vertices. Equivalently, $\sK$ is
flag if any set of vertices of $\sK$ which are pairwise connected
by edges spans a simplex.

A \emph{clique} (or a \emph{complete subgraph}) of a graph
$\Gamma$ is a subset $I$ of vertices such that every two vertices
in $I$ are connected by an edge. Each flag complex $\sK$ is the
\emph{clique complex} of its one-skeleton $\Gamma=\sK^1$, that is,
the simplicial complex formed by filling in each clique of
$\Gamma$ by a face.

Note that the colimits in~\eqref{racolimit}, being the
corresponding right-angled groups, depend only on the 1-skeleton
of~$\sK$ and do not depend on missing faces with more than 2
vertices. For example, the colimits of the diagrams of groups
$\mathcal D_{\varDelta^2}(\Z)$ and $\mathcal
D_{\partial\varDelta^2}(\Z)$ are both~$\Z^3$. This reflects the
lack of ``higher'' commutativity in the category of groups: when
generators $g_i$ commute pairwise, they commute altogether. This
phenomenon is studied in more detail in~\cite{p-r-v04}
and~\cite{pa-ra08}.

For these reasons we denote the right-angled Artin and Coxeter
groups corresponding to the 1-skeleton of $\sK$ simply by
$\raag_\sK$ and $\racg_\sK$ respectively.

By analogy with the polyhedral product of spaces $\mb
X^\sK=\colim_{I\in\sK}\mb X^I$, we may consider the following more
general construction of a discrete group.

\begin{construction}[graph product]\label{cgrpr}
Let $\sK$ be a simplicial complex on~$[m]$ and let $\mb
G=(G_1,\ldots,G_m)$ be a sequence of $m$ groups, which we think of
as discrete topological groups. We also assume that none of $G_i$
is trivial, i.\,e. $G_i\ne\{1\}$. For each subset $I\subset[m]$ we
set
\[
  \mb G^I=\bigl\{(g_1,\ldots,g_m)\in
  \prod_{k=1}^m G_k\colon\; g_k=1\quad\text{for }k\notin I\bigl\}.
\]
Then consider the following $\ca{(\sK)}$-diagram of groups:
\[
\begin{aligned}
  \mathcal D_\sK(\mb G)\colon
  \ca(\sK)&\longrightarrow\cat{grp},\quad&
  I\longmapsto \mb G^I,
\end{aligned}
\]
which maps a morphism $I\subset J$ to the canonical monomorphism
of groups $\mb G^I\to\mb G^J$. Define the group
\begin{equation}\label{ragcolimit}
  \mb G^{\sK}=\colim^{\scat{grp}}\mathcal D_\sK(\mb
  G)=\colim^{\scat{grp}}_{I\in\sK}\mb G^I.
\end{equation}
The group $\mb G^\sK$ depends only on the graph $\sK^1$ and is
called the \emph{graph product} of the groups $G_1,\ldots,G_m$. We
have canonical homomorphisms $\mb G^I\to\mb G^\sK$, $I\in\sK$,
which can be shown to be injective.
\end{construction}

As in the case of right-angled Artin and Coxeter groups
(corresponding to $G_i=\Z$ and $G_i=\Z_2$ respectively), one
readily deduces the following more explicit description from the
universal property of the colimit:

\begin{proposition}
The is an isomorphism of groups
\[
  \mb G^{\sK}\cong\mathop{\mbox{\Huge$\star$}}_{k=1}^m G_k\big/(g_ig_j=g_jg_i\,\text{ for
}g_i\in G_i,\,g_j\in G_j,\,\{i,j\}\in\sK),
\]
where $\mathop{\mbox{\Huge$\star$}}_{k=1}^m G_k$ denotes the free
product of the groups~$G_k$.
\end{proposition}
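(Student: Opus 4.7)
The plan is to verify the universal property of the right-hand side as a cocone over the diagram $\mathcal D_\sK(\mb G)$ in $\cat{grp}$, and use the universal property of $\colim^{\scat{grp}}$ to produce mutually inverse homomorphisms. Let $Q$ denote the group on the right-hand side of the desired isomorphism, namely the free product $\star_{k=1}^m G_k$ modulo the relations $g_ig_j=g_jg_i$ for $g_i\in G_i$, $g_j\in G_j$ and $\{i,j\}\in\sK$.

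First I would construct a homomorphism $\phi\colon \mb G^\sK\to Q$. For each simplex $I\in\sK$, I would exhibit a homomorphism $\phi_I\colon\mb G^I=\prod_{i\in I}G_i\to Q$ induced by the canonical inclusions $G_i\hookrightarrow\star_{k=1}^m G_k\twoheadrightarrow Q$. The map $\phi_I$ is well-defined on the direct product precisely because for any two distinct vertices $i,j\in I$ the edge $\{i,j\}$ lies in $\sK$ (as $\sK$ is closed under taking subsets), so the images of $G_i$ and $G_j$ commute in~$Q$. These maps are evidently compatible with the inclusions $\mb G^I\hookrightarrow\mb G^J$ for $I\subset J$, so they form a cocone over $\mathcal D_\sK(\mb G)$ and assemble into $\phi$ by the universal property of $\colim^{\scat{grp}}$ given by~\eqref{ragcolimit}.

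Next I would construct the inverse $\overline\psi\colon Q\to\mb G^\sK$. The canonical homomorphisms $G_k=\mb G^{\{k\}}\to\mb G^\sK$ (valid since each $\{k\}\in\sK$) assemble, by the universal property of the free product, into a homomorphism $\psi\colon\star_{k=1}^m G_k\to\mb G^\sK$. To see that $\psi$ descends to $Q$, I need to check that whenever $\{i,j\}\in\sK$ the elements $\psi(g_i),\psi(g_j)$ commute in $\mb G^\sK$. But in that case $\{i,j\}$ is a simplex, and $\psi(g_i),\psi(g_j)$ are images of commuting elements in the direct product $\mb G^{\{i,j\}}=G_i\times G_j$ under the canonical map $\mb G^{\{i,j\}}\to\mb G^\sK$, so the relation holds and $\psi$ factors as~$\overline\psi$.

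Finally, I would verify that $\phi\circ\overline\psi$ and $\overline\psi\circ\phi$ each restrict to the identity on every subgroup $G_k$; since these subgroups generate $Q$ (by definition of the free product quotient) as well as $\mb G^\sK$ (being a colimit of groups built from the $G_k$), both compositions are identities. I do not expect any serious obstacle here: the only subtle point is the well-definedness of each $\phi_I$, which rests solely on the hereditary property of simplicial complexes, and this is built into the hypotheses.
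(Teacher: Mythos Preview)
Your argument is correct and is precisely the verification the paper has in mind: the paper does not give a proof beyond the remark that the description ``readily'' follows from the universal property of the colimit~\eqref{ragcolimit}, and your construction of mutually inverse homomorphisms via the cocone on $\mathcal D_\sK(\mb G)$ and the universal property of the free product is exactly that deduction spelled out.
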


\begin{remark}
We use the symbol $\star$ to denote the free product of groups,
instead of the more common~$\ast$; the latter is reserved for the
join of topological spaces.
\end{remark}

\section{Classifying spaces}
Here we collect the information about the classifying spaces for
graph product groups. The results of this section are not new, but
as they are spread across the literature we find it convenient to
collect everything in one place. The corresponding references are
given below.

Recall that a path-connected space $X$ is \emph{aspherical} if
$\pi_i(X)=0$ for $i\ge2$. An aspherical space $X$ is an
Eilenberg--Mac Lane space $K(\pi,1)$ with $\pi=\pi_1(X)$.

Given a (discrete) group $G$, there is a \emph{universal
$G$-covering} $EG\to BG$ whose total space $EG$ is contractible
and the base $BG$, known as the \emph{classifying space} for~$G$,
has the homotopy type~$K(G,1)$ (i.\,e. $\pi_1(BG)=G$ and
$\pi_i(BG)=0$ for $i\ge2$). We shall therefore switch between the
notation $BG$ and $K(G,1)$ freely.

Note that $B\Z\simeq S^1$ and $B\Z_2\simeq\R P^\infty$, with the
universal coverings $\R\to S^1$ and $S^\infty\to\R P^\infty$
respectively.

Now we use the notation from Construction~\ref{cgrpr}. The
classifying space $B\mb G^I$ is the product of $BG_i$ over $i\in
I$. We therefore have the polyhedral product $(B\mb G)^\sK$
corresponding to the sequence of pairs $(B\mb
G,\pt)=\{(BG_1,\pt),\ldots,(BG_m,\pt)\}$.
Similarly, we have the polyhedral product $(E\mb G,\mb G)^\sK$
corresponding to the sequence of pairs $(E\mb G,\mb
G)=\{(EG_1,G_1),\ldots,(EG_m,G_m)\}$. Here each $G_i$ is included
in $EG_i$ as the fibre of the covering $EG_i\to BG_i$ over the
basepoint.

The homotopy fibrations~\eqref{artfib} and~\eqref{coxfib} can be
generalised as follows.

\begin{proposition}\label{ragfib}
The sequence of canonical maps
\[
  (E\mb G,\mb G)^\sK\longrightarrow (B\mb
  G)^\sK\longrightarrow\prod_{k=1}^m BG_k
\]
is a homotopy fibration.
\end{proposition}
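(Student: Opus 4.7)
The plan is to exhibit $(E\mb G,\mb G)^\sK$ as a geometric model for the homotopy fibre of the inclusion $(B\mb G)^\sK\hookrightarrow\prod_{k=1}^m BG_k$, obtained by pulling back the product bundle $\prod_{k=1}^m EG_k\to\prod_{k=1}^m BG_k$. This directly generalises~\eqref{artfib} and~\eqref{coxfib}, which correspond to $G_k=\Z$ and $G_k=\Z_2$ respectively.

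The core step is the identification of the set-theoretic pullback
\[
(B\mb G)^\sK\times_{\prod_k BG_k}\prod_k EG_k
\]
with $(E\mb G,\mb G)^\sK$. A point $(e_1,\ldots,e_m)\in\prod_k EG_k$ lies in this pullback precisely when its image $(p_1(e_1),\ldots,p_m(e_m))\in\prod_k BG_k$ belongs to $(B\mb G)^\sK$. By~\eqref{XAI} applied to the pairs $(BG_k,\pt)$, this means that there exists $I\in\sK$ such that $p_k(e_k)=\pt$ for every $k\notin I$, i.e.\ $e_k$ lies in the fibre $G_k\subset EG_k$ over the basepoint for each $k\notin I$. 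Applying~\eqref{XAI} to the pairs $(EG_k,G_k)$ recognises this subset of $\prod_k EG_k$ as precisely $(E\mb G,\mb G)^\sK$, with projection onto $(B\mb G)^\sK$ equal to the canonical map in the statement.

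Since $\prod_k EG_k\to\prod_k BG_k$ is a (principal) fibration, its pullback $(E\mb G,\mb G)^\sK\to(B\mb G)^\sK$ is also a fibration, with fibre $\prod_k G_k$. To conclude that the three-term sequence is a homotopy fibration, I would invoke the standard fact that whenever $E\to B$ is a fibration with $E$ contractible, the pullback $X\times_B E$ is weakly equivalent to the homotopy fibre of any map $X\to B$; this is immediate by splicing the long exact sequences of the fibrations $F\to X\times_B E\to X$ and $F\to E\to B$, using $E\simeq\pt$ to identify the connecting homomorphism $\pi_{n+1}(X)\to\pi_n(F)$ with $\pi_{n+1}(X)\to\pi_{n+1}(B)$. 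Applying this with $X=(B\mb G)^\sK$, $B=\prod_k BG_k$ and $E=\prod_k EG_k$ (contractible as a product of contractible total spaces) yields the desired conclusion. The substance of the argument lies in the clean combinatorial identification of the pullback with a polyhedral product; the homotopy-theoretic step is essentially formal, so I do not anticipate any serious obstacle.
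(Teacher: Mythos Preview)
Your argument is correct and is a genuinely different, more elementary route than the one the paper takes. The paper does not identify $(E\mb G,\mb G)^\sK$ directly as a pullback of the product bundle. Instead it quotes~\cite[Proposition~5.1]{p-r-v04} to identify the homotopy fibre of $(B\mb G)^\sK\to\prod_k BG_k$ with the homotopy colimit $\hocolim_{I\in\sK}\mb G^{[m]}/\mb G^I$ of discrete coset spaces, and then argues that this diagram, while not Reedy cofibrant, is objectwise equivalent to the Reedy cofibrant diagram $\mathcal D_\sK(E\mb G,\mb G)$, so that the homotopy colimit agrees with the ordinary colimit $(E\mb G,\mb G)^\sK$. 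Your pullback identification bypasses this machinery entirely: the observation that the preimage of $(B\mb G)^\sK$ under $\prod_k EG_k\to\prod_k BG_k$ is exactly $(E\mb G,\mb G)^\sK$ is both shorter and self-contained, and the passage from ``pullback along a fibration with contractible total space'' to ``homotopy fibre'' is indeed standard. What the paper's approach buys is uniformity: the same~\cite[Proposition~5.1]{p-r-v04} is invoked again in the proof of Theorem~\ref{gpfund}(b) to analyse the homotopy fibre of $(B\mb G)^\sK\to B(\mb G^\sK)$, where no analogous product bundle is available, so the homotopy-colimit description is doing real work there. For the present proposition, however, your direct argument is cleaner.
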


When each $G_k$ is $\Z$, we obtain the fibration~\eqref{artfib},
as the pair $(E\Z,\Z)$ is homotopy equivalent to $(\R,\Z)$.
Similarly, when each $G_k$ is $\Z_2$, we obtain~\eqref{coxfib}, as
the pair $(E\Z_2,\Z_2)$ is homotopy equivalent to $(D^1,S^0)$.

\begin{proof}[Proof of Proposition~\ref{ragfib}]
We denote $\prod_{k=1}^m BG_k$ by $B\mb G^{[m]}$; this is
compatible with the notation $B\mb G^I$. According
to~\cite[Proposition~5.1]{p-r-v04}, the homotopy fibre of the
inclusion $(B\mb G)^\sK\to B\mb G^{[m]}$ can be identified with
the homotopy colimit $\hocolim_{I\in\sK}\mb G^{[m]}/\mb G^I$ of
the $\ca(\sK)$-diagram in $\cat{top}$ given on the objects by
$I\mapsto\mb G^{[m]}/\mb G^I$ (where the latter is the quotient
group, viewed as a discrete space) and sending a morphism
$I\subset J$ to the canonical projection $\mb G^{[m]}/\mb G^I\to
\mb G^{[m]}/\mb G^J$ of the quotients. This diagram is not Reedy
cofibrant, e.\,g. because $\mb G^{[m]}/\mb G^I\to\mb G^{[m]}/\mb
G^J$ is not a cofibration of spaces. The latter map is homotopy
equivalent to the closed cofibration $(E\mb G,\mb G)^I\to(E\mb
G,\mb G)^J$, which is a morphism in the $\ca(\sK)$-diagram
$\mathcal D_\sK(E\mb G,\mb G)$, see~\eqref{diagK}. The diagram
$\mathcal D_\sK(E\mb G,\mb G)$ is Reedy cofibrant,
see~\cite[Proposition~8.1.1]{bu-pa15}. Therefore, the homotopy
fibre of the inclusion $(B\mb G)^\sK\to B\mb G^{[m]}$ is given by
\[
  \hocolim_{I\in\sK}\mb G^{[m]}/\mb G^I\simeq
  \colim_{I\in\sK}(E\mb G,\mb G)^I=(E\mb G,\mb G)^\sK.\qedhere
\]
\end{proof}


Now we state the following group-theoretic consequence of the
homotopy fibration in Proposition~\ref{ragfib}.

\begin{theorem}\label{gpfund}
Let $\sK$ be a simplicial complex on $m$ vertices, and let $\mb
G^{\sK}$ be a graph product group given by~\eqref{ragcolimit}.
\begin{itemize}
\item[(a)] $\pi_1((B\mb G)^\sK)\cong\mb G^\sK$.
\item[(b)] Both spaces $(B\mb G)^\sK$ and $(E\mb G,\mb G)^\sK$ are aspherical if and only if $\sK$ is
flag. Hence, $B(\mb G^\sK)=(B\mb G)^\sK$ whenever $\sK$ is flag.
\item[(c)] $\pi_i((B\mb G)^\sK)\cong\pi_i((E\mb G,\mb G)^\sK)$ for $i\ge2$.
\item[(d)] $\pi_1((E\mb G,\mb G)^\sK)$ is isomorphic to the kernel of the canonical projection
$\mb G^\sK\to\prod_{k=1}^m G_k$.
\end{itemize}
\end{theorem}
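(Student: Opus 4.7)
The plan is to derive parts (a), (c), (d) formally from Proposition~\ref{ragfib} and the colimit description~\eqref{ppcolim}, leaving part (b) as the only substantive point. For part (a), I would use $(B\mb G)^\sK=\colim_{I\in\sK}B\mb G^I$ from~\eqref{ppcolim}, noting that the diagram $I\mapsto B\mb G^I$ is Reedy cofibrant (as in the proof of Proposition~\ref{ragfib}), so its colimit is also a homotopy colimit of the Eilenberg--Mac Lane spaces $K(\mb G^I,1)$. The fundamental group functor commutes with such homotopy colimits of $K(\pi,1)$'s (alternatively, one applies the Seifert--van Kampen theorem inductively over the simplices of~$\sK$), giving
\[
  \pi_1\bigl((B\mb G)^\sK\bigr)
  =\colim^{\scat{grp}}_{I\in\sK}\pi_1(B\mb G^I)
  =\colim^{\scat{grp}}_{I\in\sK}\mb G^I
  =\mb G^\sK
\]
by~\eqref{ragcolimit}.

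For (c) and (d), I would read off the long exact sequence of homotopy groups of the fibration of Proposition~\ref{ragfib}. Since $\prod_{k=1}^m BG_k$ is a $K(\prod_k G_k,1)$, its higher homotopy vanishes, so for $i\ge 2$ the exact portion $0\to\pi_i((E\mb G,\mb G)^\sK)\to\pi_i((B\mb G)^\sK)\to 0$ yields~(c). For~(d), the tail of the sequence, combined with~(a), identifies $\pi_1$ of the fibre with the kernel of the projection $\mb G^\sK\to\prod_k G_k$, provided that the map $\pi_1((B\mb G)^\sK)\to\prod_k G_k$ is surjective; this is immediate because each factor $BG_k$ sits as a retract of $(B\mb G)^\sK$ by Proposition~\ref{ppret} applied to the vertex $\{k\}\in\sK$.

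The real work lies in part~(b). For the ``if'' direction, the idea is to equip $(E\mb G,\mb G)^\sK$ with a generalised cubical structure whose cells are the contractible boxes $\prod_{i\in I}EG_i$ glued along $\mb G^I$-orbits for $I\in\sK$; the link of each vertex then encodes the combinatorics of $\sK$ in a standard way, and Gromov's combinatorial criterion for non-positive curvature reduces to $\sK$ being flag. The Cartan--Hadamard theorem then gives contractibility of the universal cover and hence asphericity of $(E\mb G,\mb G)^\sK$; the fibration of Proposition~\ref{ragfib} transports asphericity to $(B\mb G)^\sK$, and~(a) identifies the latter with $B(\mb G^\sK)$. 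For the converse, assume $\sK$ has a minimal non-face $J$ with $k=|J|\ge 3$, so that $\sK_J=\partial\varDelta^{k-1}$; by Proposition~\ref{ppret} the polyhedral product $(E\mb G,\mb G)^{\sK_J}$ is a retract of $(E\mb G,\mb G)^\sK$, and a direct computation, expressing $(E\mb G,\mb G)^{\partial\varDelta^{k-1}}$ as an iterated homotopy pushout of the contractible pieces $\prod_{i\in I}EG_i$ along the discrete fibres $G_i$, shows that this retract carries a nontrivial $\pi_{k-1}$, so neither $(E\mb G,\mb G)^\sK$ nor (by~(c)) $(B\mb G)^\sK$ can be aspherical.

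The hardest step will be the CAT(0) link verification in the ``if'' direction in full generality: for $G_i=\Z$ or $\Z_2$ it reduces to the classical Davis--Januszkiewicz and Charney--Davis analyses of the complexes $\rk$ and $\lk$, but in the general graph-product case one must carefully handle potentially infinite discrete fibres at each vertex before the standard flag-link argument applies.
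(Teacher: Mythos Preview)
Your treatment of (a), (c), (d) and the ``only if'' half of~(b) follows the paper's line: van Kampen/colimit for~(a), the long exact sequence of the fibration of Proposition~\ref{ragfib} for (c) and~(d), and restriction to a minimal non-face $J$ with $|J|=k\ge3$ for non-asphericity. The paper phrases that last step via Porter's identification of the homotopy fibre of the fat-wedge inclusion $\fw(BG_{j_1},\ldots,BG_{j_k})\hookrightarrow\prod_{j\in J}BG_j$ as $\varSigma^{k-1}G_{j_1}\wedge\cdots\wedge G_{j_k}$, which is precisely your space $(E\mb G,\mb G)^{\partial\varDelta^{k-1}}$, so the two computations of $\pi_{k-1}$ coincide.

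The ``if'' half of~(b) is where you genuinely diverge. The paper does not use non-positive curvature. Instead it looks at the comparison map $(B\mb G)^\sK\to B(\mb G^\sK)$ induced by the canonical homomorphisms $\mb G^I\to\mb G^\sK$, identifies its homotopy fibre with $\hocolim_{I\in\sK}\mb G^\sK/\mb G^I$, rewrites the latter as the identification space $(B\ca(\sK)\times\mb G^\sK)/\!\sim$, and quotes~\cite{p-r-v04} for its contractibility when $\sK$ is flag; the map is then a homotopy equivalence and $(B\mb G)^\sK$ is aspherical. This argument is uniform in the $G_i$ and needs no metric input, at the price of importing the machinery of~\cite{p-r-v04}. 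Your CAT(0) route is the classical geometric alternative (Davis for $\racg_\sK$, Charney--Davis for $\raag_\sK$) and it does extend to arbitrary discrete $G_i$, but one point needs adjusting: the ``boxes'' $\prod_{i\in I}EG_i$ are not cells one can feed into Gromov's criterion. Pass first to the homotopy-equivalent model $(C\mb G,\mb G)^\sK$, where each $CG_i$ is the cone on the discrete set~$G_i$; this is an honest (possibly locally infinite) cube complex, its vertex links are joins of discrete sets indexed by faces of the appropriate link in~$\sK$, and Gromov's flag condition then reduces exactly to flagness of~$\sK$. With that replacement your plan is sound.
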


\begin{proof}
To prove (a) we proceed inductively by adding simplices to~$\sK$
one by one and use van Kampen's Theorem. The base of the induction
is $\sK$ consisting of $m$ disjoint points. Then $(B\mb G)^\sK$ is
the wedge $BG_1\vee\cdots\vee BG_m$, and $\pi_1((B\mb G)^\sK)$ is
the free product $G_1\mathbin\star\cdots\mathbin\star G_m$. This
is precisely $\mb G^\sK$, so (a) holds. Assume now that $\sK'$ is
obtained from $\sK$ by adding a single 1-dimensional
simplex~$\{i,j\}$. Then, by the definition of the polyhedral
product,
\[
  (B\mb G)^{\sK'}=(B\mb G)^\sK\cup (BG_i\times BG_j),
\]
where the two pieces are glued along $BG_i\vee BG_j$. By van
Kampen's Theorem, $\pi_1((B\mb G)^{\sK'})$ is the amalgamated free
product $\pi_1((B\mb G)^\sK)\mathbin\star_{(G_i\mathbin\star
G_j)}(G_i\times G_j)$. The latter group is obtained from
$\pi_1((B\mb G)^\sK)$ by adding all relations of the form
$g_ig_j=g_jg_i$ for $g_i\in G_i$, $g_j\in G_j$. By the inductive
assumption, this is precisely $\mb G^{\sK'}$. Adding simplices of
dimension $\ge2$ to $\sK$ does not change $\mb G^\sK$ and results
in adding cells of dimension $\ge3$ to $(B\mb G)^\sK$, which does
not change the fundamental group $\pi_1((B\mb G)^\sK)$. The
inductive step is therefore complete, proving~(a).

Now we prove (b). The canonical homomorphisms $\mb G^I\to\mb
G^\sK$ give rise to the maps of classifying spaces $B\mb G^I\to
B(\mb G^\sK)$. These define a morphism from the $\ca(\sK)$-diagram
$\mathcal D_\sK(B\mb G,\pt)$ to the constant diagram $B(\mb
G^\sK)$, and hence a map
\begin{equation}\label{bcolim}
  \colim_{I\in\sK} B\mb G^I=(B\mb G)^\sK\to B(\mb G^\sK).
\end{equation}
According to~\cite[Proposition~5.1]{p-r-v04}, the homotopy fibre
of the map~\eqref{bcolim} can be identified with the homotopy
colimit $\hocolim_{I\in\sK}\mb G^\sK/\mb G^I$ of the
$\ca(\sK)$-diagram in $\cat{top}$ given on the objects by
$I\mapsto\mb G^\sK/\mb G^I$ (where the latter is the right coset,
viewed as a discrete space) and sending a morphism $I\subset J$ to
the canonical projection $\mb G^\sK/\mb G^I\to\mb G^\sK/\mb G^J$
of cosets. By~\cite[Corollary~5.4]{p-r-v04}, the homotopy  colimit
$\hocolim_{I\in\sK}\mb G^\sK/\mb G^I$ is homeomorphic to the
identification space
\begin{equation}\label{idspace}
  \bigl(B\ca(\sK)\times\mb G^\sK\bigr)\big/\!\sim.
\end{equation}
Here $B\ca(\sK)$ is the classifying space of $\ca(\sK)$, which is
homeomorphic to the cone on~$|\sK|$. The equivalence relation
$\sim$ is defined as follows: $(x,gh)\mathop\sim(x,g)$ whenever
$h\in\mb G^I$ and $x\in B(I\mathop\downarrow\ca(\sK))$, where
$I\mathop\downarrow\ca(\sK)$ is the \emph{undercategory}, whose
objects are $J\in\sK$ such that $I\subset J$, and
$B(I\mathop\downarrow\ca(\sK))$ is homeomorphic to the star of $I$
in~$\sK$. When $\sK$ is a flag complex, the identification
space~\eqref{idspace} is contractible
by~\cite[Proposition~6.1]{p-r-v04}. Therefore, the
map~\eqref{bcolim} is a homotopy equivalence, which implies that
$(B\mb G)^\sK$ is aspherical when $\sK$ is flag.

Assume now that $\sK$ is not flag. Choose a missing face
$J=\{j_1,\ldots,j_k\}\subset[m]$ with $k\ge3$ vertices and
consider the corresponding full subcomplex $\sK_{J}$. Then $(B\mb
G)^{\sK_J}$ is the fat wedge of the spaces $\{BG_j,j\in J\}$ (see
Example~\ref{ppexa}.1), and it is a retract of $(B\mb G)^{\sK}$ by
Proposition~\ref{ppret}. Hence, in order to see that $(B\mb
G)^{\sK}$ is not aspherical, it is enough to check that $(B\mb
G)^{\sK_J}$ is not aspherical. Let $\fw(X_1,\ldots,X_k)$ denote
the fat wedge of spaces $X_1,\ldots,X_k$. According to a result of
Porter~\cite{port66}, the homotopy fibre of the inclusion
\[
  \fw(X_1,\ldots,X_k)\hookrightarrow\prod_{i=1}^k X_i
\]
is $\varSigma^{k-1}\varOmega X_1\wedge\cdots\wedge\varOmega X_k$,
where $\varSigma$ denotes the suspension and $\varOmega$ the loop
space functor. In our case we obtain that the homotopy fibre of
the inclusion $(B\mb G)^{\sK_J}\to\prod_{j\in J} BG_j$ is
$\varSigma^{k-1}G_{j_1}\wedge\cdots\wedge G_{j_k}$. Since each
$G_j$ is a discrete space, the latter suspension is a wedge of
$(k-1)$-dimensional spheres. It has nontrivial homotopy group
$\pi_{k-1}$. Since $\prod_{j\in J}BG_j$ is a $K(\pi,1)$-space, the
homotopy exact sequence implies that $\pi_{k-1}((B\mb
G)^{\sK_J})\ne0$ for some $k\ge3$. Hence, $(B\mb G)^{\sK_J}$ and
$(B\mb G)^{\sK}$ are non-aspherical.

Asphericity of~$(E\mb G,\mb G)^\sK$ and statements~(c) and~(d)
follow from the homotopy exact sequence of the fibration in
Proposition~\ref{ragfib}, as $\pi_i(\prod_{k=1}^m BG_k)=0$,
$i\ge2$.
\end{proof}

Specialising to the cases $G_k=\Z$ and $G_k=\Z_2$ respectively we
obtain the following results about right-angled Artin and Coxeter
groups. Note that in these two cases the groups $G_k$ are abelian,
so $\mb G^\sK\to\prod_{k=1}^m G_k$ is the abelianisation
homomorphism, and its kernel is the commutator subgroup~$(\mb
G^\sK){\vphantom{\bigr)}}'$.

\begin{corollary}\label{artfund}
Let $\sK$ be a simplicial complex on $m$ vertices, let $(S^1)^\sK$
and $\lk$ be the polyhedral products given by~\eqref{s1k}
and~\eqref{lk} respectively, and let $\raag_\sK$ be the
corresponding right-angled Artin group.
\begin{itemize}
\item[(a)] $\pi_1((S^1)^\sK)\cong\raag_\sK$.
\item[(b)] Both $(S^1)^\sK$ and $\lk$ are aspherical if and only if $\sK$ is
flag.
\item[(c)] $\pi_i((S^1)^\sK)\cong\pi_i(\lk)$ for $i\ge2$.
\item[(d)] $\pi_1(\lk)$ is isomorphic to the commutator subgroup~$\raag'_\sK$.
\end{itemize}
\end{corollary}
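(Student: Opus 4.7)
The plan is to deduce the corollary as a direct specialisation of Theorem~\ref{gpfund} to the case $G_k=\Z$ for all $k$, once a couple of standard homotopy identifications are in place. First I would record the two identifications of pairs: $B\Z\simeq S^1$ (so that $(B\mb G)^\sK$ for $\mb G=(\Z,\ldots,\Z)$ is, up to homotopy, the polyhedral product $(S^1)^\sK$ of~\eqref{s1k}), and $(E\Z,\Z)\simeq(\R,\Z)$ as pairs of pointed spaces (with $\R\to S^1$ serving as the universal cover and $\Z$ as the fibre). The second identification passes to polyhedral products because the colimit defining $(\mb X,\mb A)^\sK$ is functorial in pairs up to homotopy equivalence (after applying the Reedy cofibrancy noted in the proof of Proposition~\ref{ragfib}), so $(E\mb G,\mb G)^\sK\simeq(\R,\Z)^\sK=\lk$.

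With these identifications parts (a), (b) and (c) are immediate transcriptions of parts (a), (b) and (c) of Theorem~\ref{gpfund}, using that the colimit defining $\mb G^\sK$ in Construction~\ref{cgrpr} coincides, when each $G_k=\Z$, with the presentation~\eqref{raag} of $\raag_\sK$ (this is the content of~\eqref{racolimit}). For part (a) one could alternatively replay the van Kampen induction of Theorem~\ref{gpfund}(a) verbatim; I would just quote Theorem~\ref{gpfund}(a) instead.

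For part (d), the extra input needed is that $\Z$ is abelian. The kernel of the canonical homomorphism $\raag_\sK\to\Z^m=\prod_k\Z$ from Theorem~\ref{gpfund}(d) is then computed as follows. The map sends each generator $g_i$ to the $i$-th standard generator of~$\Z^m$, so it is surjective and its target is abelian; hence the kernel contains the commutator subgroup $\raag'_\sK$. Conversely, any element of the kernel has every generator appearing with total exponent zero, and since $\Z^m$ is free abelian, such an element lies in~$\raag'_\sK$ (equivalently, the map factors through the abelianisation $\raag_\sK^{\mathrm{ab}}\cong\Z^m$, which is an isomorphism because abelianising the presentation~\eqref{raag} just drops the commutation relations). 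Combining with Theorem~\ref{gpfund}(d) and the identification $(E\mb G,\mb G)^\sK\simeq\lk$ gives $\pi_1(\lk)\cong\raag'_\sK$.

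The whole argument is essentially formal once Theorem~\ref{gpfund} is in hand, so there is no real obstacle; the only point demanding a sentence of care is the identification $(E\Z,\Z)^\sK\simeq\lk$ at the level of polyhedral products, but this is covered by the Reedy cofibrancy of $\mathcal D_\sK(E\mb G,\mb G)$ already cited in the proof of Proposition~\ref{ragfib}.
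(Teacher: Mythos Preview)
Your proposal is correct and matches the paper's approach essentially verbatim: the paper deduces the corollary by specialising Theorem~\ref{gpfund} to $G_k=\Z$, noting (just before the statement) that the pair $(E\Z,\Z)$ is homotopy equivalent to $(\R,\Z)$, and observing that since $\Z$ is abelian the map $\mb G^\sK\to\prod_k G_k$ is the abelianisation, so its kernel is $\raag'_\sK$. Your added remarks on Reedy cofibrancy and the explicit abelianisation argument are reasonable justifications of points the paper leaves implicit, but there is no difference in strategy.
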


\begin{corollary}\label{coxfund}
Let $\sK$ be a simplicial complex on $m$ vertices, let $(\R
P^\infty)^\sK$ and $\rk$ be the polyhedral products given
by~\eqref{rpk} and~\eqref{rk} respectively, and let $\racg_\sK$ be
the corresponding right-angled Coxeter group.
\begin{itemize}
\item[(a)] $\pi_1((\R P^\infty)^\sK)\cong\racg_\sK$.
\item[(b)] Both $(\R P^\infty)^\sK$  and $\rk$ are aspherical if and only if $\sK$ is
flag.
\item[(c)] $\pi_i((\R P^\infty)^\sK)\cong\pi_i(\rk)$ for $i\ge2$.
\item[(d)] $\pi_1(\rk)$ is isomorphic to the commutator subgroup~$\racg'_\sK$.
\end{itemize}
\end{corollary}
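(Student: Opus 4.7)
The plan is to derive Corollary~\ref{coxfund} as a direct specialisation of Theorem~\ref{gpfund}, taking $G_k=\Z_2$ for every $k=1,\ldots,m$. First I would identify the relevant polyhedral products under this specialisation. Since $B\Z_2\simeq\R P^\infty$, the space $(B\mb G)^\sK$ of Theorem~\ref{gpfund} becomes precisely $(\R P^\infty)^\sK$ defined in~\eqref{rpk}. For the ``relative'' polyhedral product, observe that $E\Z_2=S^\infty$ is contractible and the fibre of the universal cover $S^\infty\to\R P^\infty$ over the basepoint is $S^0=\{-1,1\}\subset S^\infty$; hence the pair $(E\Z_2,\Z_2)$ is homotopy equivalent to the pair $(D^1,S^0)$ (both are contractible spaces containing $S^0$ as a subspace with the homotopy extension property). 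The polyhedral product functor preserves such componentwise homotopy equivalences of pairs, so $(E\mb G,\mb G)^\sK\simeq(D^1,S^0)^\sK=\rk$ by~\eqref{rk}. Moreover, by~\eqref{racolimit} we have $\mb G^\sK=\racg_\sK$ in this case.

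With these identifications, statements~(a), (b), (c) follow immediately from the corresponding statements of Theorem~\ref{gpfund}. For part~(a), $\pi_1((\R P^\infty)^\sK)\cong\pi_1((B\mb G)^\sK)\cong\mb G^\sK=\racg_\sK$. For part~(b), the ``if and only if'' asphericity criterion transports verbatim, since asphericity is a homotopy-invariant property and the two polyhedral products are unchanged up to homotopy. For part~(c), the isomorphism of higher homotopy groups follows from Theorem~\ref{gpfund}(c) applied to the pair $((B\mb G)^\sK,(E\mb G,\mb G)^\sK)\simeq((\R P^\infty)^\sK,\rk)$.

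The only remaining point is part~(d), which requires the additional observation that $\Z_2$ is abelian. By Theorem~\ref{gpfund}(d), $\pi_1(\rk)\cong\pi_1((E\mb G,\mb G)^\sK)$ is the kernel of the canonical projection
\[
  \racg_\sK=\mb G^\sK\longrightarrow\prod_{k=1}^m G_k=\Z_2^m.
\]
Because the target is abelian and the map is surjective (the generators $g_i$ of $\racg_\sK$ map onto the standard generators of $\Z_2^m$), this projection factors through and coincides with the abelianisation homomorphism $\racg_\sK\to\racg_\sK/\racg_\sK'$. Therefore its kernel is exactly the commutator subgroup $\racg'_\sK$, as required.

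I do not expect any serious obstacle: the entire argument is a translation of Theorem~\ref{gpfund} via the homotopy equivalences $B\Z_2\simeq\R P^\infty$ and $(E\Z_2,\Z_2)\simeq(D^1,S^0)$, together with the abelian nature of $\Z_2$ for identifying the kernel in~(d). The mildest care needed is to verify that the polyhedral product construction respects the homotopy equivalence of pairs $(E\Z_2,\Z_2)\simeq(D^1,S^0)$, which is standard given that both pairs are NDR-pairs (equivalently, cofibrations), so the colimit formula~\eqref{ppcolim} preserves the equivalence.
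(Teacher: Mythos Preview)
Your proposal is correct and follows exactly the paper's approach: the corollary is obtained by specialising Theorem~\ref{gpfund} to $G_k=\Z_2$, using the identifications $B\Z_2\simeq\R P^\infty$ and $(E\Z_2,\Z_2)\simeq(D^1,S^0)$, and observing that since $\Z_2$ is abelian the canonical projection $\racg_\sK\to\Z_2^m$ is the abelianisation map, whence its kernel is $\racg'_\sK$. The paper gives no separate proof beyond precisely this remark preceding the two corollaries.
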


\begin{remark}
All ingredients in the proof of Theorem~\ref{gpfund} are contained
in~\cite{p-r-v04}. The fact that the polyhedral product $(B\mb
G)^\sK$ is the classifying space for the graph product group $\mb
G^\sK$ whenever $\sK$ is a flag complex implies that the
classifying space functor converts the colimit of groups (defining
the graph product) to the colimit of topological spaces (defining
the polyhedral product). This is not the case when $\sK$ is not
flag because of the presence of higher Whitehead and Samelson
products (see~\cite{p-r-v04,pa-ra08,gr-th16}), but the situation
can be remedied by replacing colimits with homotopy colimits. All
these facts were proved in~\cite{p-r-v04} for arbitrary
well-pointed topological groups.

Statements (a) and (b) of Corollary~\ref{artfund}, implying a
homotopy equivalence $(S^1)^\sK\simeq K(\raag_\sK,1)$ for
flag~$\sK$, were obtained by Kim and
Roush~\cite[Theorem~10]{ki-ro80}. Statements (a) and (b) of
Corollary~\ref{coxfund}, implying a homotopy equivalence $(\R
P^\infty)^\sK\simeq K(\racg_\sK,1)$ for flag~$\sK$, are implicit
in the works of Davis~\cite{davi83} and
Davis--Januszkiewicz~\cite[p.~437]{da-ja91}. In particular,
contractibility of the space~\eqref{idspace} (which is the crucial
step in the proof of  Theorem~\ref{gpfund}~(b)) in the case of
right-angled Coxeter group $\racg_\sK$ follows
from~\cite[Theorem~13.5]{davi83}. The isomorphism between
$\pi_1(\rk)$ and the commutator subgroup $\racg'_\sK$ was also
obtained in the work of Droms~\cite{drom03} (his cubic complex is
the 2-dimensional skeleton of our complex~$\rk$, and therefore has
the same fundamental group).

In the case of a general graph product $\mb G^\sK$, the result
that both spaces $(B\mb G)^\sK$ and $(E\mb G,\mb G)^\sK$ are
aspherical if and only if $\sK$ is flag appeared in the work of
Stafa~\cite[Theorem~1.1]{staf15}.
\end{remark}

\begin{example}\label{mcyc1}
Let $\sK$ be an $m$-cycle (the boundary of an $m$-gon). A simple
argument with Euler characteristic shows that $\rk$ is
homeomorphic to a closed orientable surface of genus
$(m-4)2^{m-3}+1$ (this observation goes back to a 1938 work of
Coxeter, see~\cite[Proposition~4.1.8]{bu-pa15}). Therefore, the
commutator subgroup of the corresponding right-angled Coxeter
group~$\racg_\sK$ is a surface group. This example was studied
in~\cite{s-d-s89} and~\cite{drom03}.

Similarly, when $|\sK|\cong S^2$ (which is equivalent to $\sK$
being the boundary of a 3-dimensional simplicial polytope), $\rk$
is a 3-dimensional manifold. Therefore, the commutator subgroup of
the corresponding $\racg_\sK$ is a $3$-manifold group. The fact
that 3-manifold groups appear as subgroups in right-angled Artin
and Coxeter groups has attracted much attention in the recent
literature.
\end{example}

All homology groups are considered with integer coefficients.  The
homology of $\rk$ is described by the following result. For the
particular case of flag $\sK$ it gives a description of the
homology of the commutator subgroup~$\racg'_\sK$.

\begin{theorem}[{\cite{bu-pa00},
\cite[\S4.5]{bu-pa15}}]\label{homrk}
For any $k\ge0$, there is an isomorphism
\[
  H_k(\rk)\cong\bigoplus_{J\subset[m]}\widetilde
  H_{k-1}(\sK_J),
\]
where $\widetilde H_{k-1}(\sK_J)$ is the reduced simplicial
homology group of~$\sK_J$.
\end{theorem}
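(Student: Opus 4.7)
The plan is to compute $H_*(\rk)$ directly from its natural cubical cell structure inside $[-1,1]^m$. Each cell has the form $\sigma_{I,\omega}=\prod_{i\in I}(-1,1)\times\prod_{j\notin I}\{\omega(j)\}$ for $I\in\sK$ and $\omega\colon[m]\setminus I\to\{-1,+1\}$, with dimension $|I|$. I reparameterize cells by setting $J:=I\cup\omega^{-1}(-1)\subset[m]$ and writing $\sigma_{I,J}$ for the resulting cell; for fixed $J$ the cells biject with the simplices of the full subcomplex $\sK_J$. A direct computation of the cubical boundary gives
\[
  \partial\sigma_{I,J}=\sum_{i\in I}\epsilon_{i,I}\bigl(\sigma_{I\setminus i,J\setminus i}-\sigma_{I\setminus i,J}\bigr),
\]
with signs $\epsilon_{i,I}=\pm1$ determined by the position of~$i$ in~$I$. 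Since $|J|$ never increases under $\partial$, the subspaces $F_pC_*(\rk):=\bigoplus_{|J|\le p}\mathbb Z\langle\sigma_{I,J}\rangle$ form an increasing filtration by subcomplexes.

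On the associated graded the term $\sigma_{I\setminus i,J\setminus i}$ drops out (it lies in $F_{|J|-1}$), so $\mathrm{gr}_pC_*(\rk)=\bigoplus_{|J|=p}C^J$ where $C^J$ is spanned by $\{\sigma_{I,J}:I\in\sK_J\}$ with differential $-\sum_{i\in I}\epsilon_{i,I}\sigma_{I\setminus i,J}$. Under $\sigma_{I,J}\leftrightarrow I$ this is the augmented simplicial chain complex $\widetilde C_*(\sK_J)$ shifted up by one degree, so $H_k(C^J)\cong\widetilde H_{k-1}(\sK_J)$. The filtration spectral sequence therefore reads
\[
E^1_{p,q}=\bigoplus_{|J|=p}\widetilde H_{p+q-1}(\sK_J)\Longrightarrow H_{p+q}(\rk).
\]

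To conclude I would build an explicit chain-level splitting. For each $J\subset[m]$ and $I\in\sK_J$ set
\[
z_{I,J}:=\sum_{T\subset J\setminus I}(-1)^{|T|}\sigma_{I,J\setminus T}\in C_{|I|}(\rk).
\]
A telescoping calculation, which recognises the sum $\sum_{T\subset J\setminus I}(-1)^{|T|}\sigma_{I\setminus i,J\setminus T}$ appearing in $\partial z_{I,J}$ as $z_{I\setminus i,J}+z_{I\setminus i,J\setminus i}$ (split the range $T'\subset J\setminus(I\setminus i)$ of $z_{I\setminus i,J}$ by whether $i\in T'$), yields $\partial z_{I,J}=-\sum_{i\in I}\epsilon_{i,I}\,z_{I\setminus i,J}$. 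Hence $I\mapsto(-1)^{|I|}z_{I,J}$ is a chain map $\iota_J\colon\widetilde C_*(\sK_J)[1]\to C_*(\rk)$ which lands in $F_{|J|}$ with leading term $\sigma_{I,J}$ modulo $F_{|J|-1}$. Assembling, $\bigoplus_J\iota_J$ induces on the associated graded the identification from the previous paragraph; the comparison theorem for bounded filtered complexes then guarantees it is a quasi-isomorphism, yielding the theorem. The main obstacle is the sign-and-set bookkeeping in the telescoping identity for $\partial z_{I,J}$; the rest is formal.
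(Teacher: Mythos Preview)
The paper does not supply its own proof of this theorem; it is quoted from~\cite{bu-pa00} and~\cite[\S4.5]{bu-pa15}. Your argument is correct and is essentially the standard cellular computation found in those references: one uses the natural CW structure on $\rk\subset[-1,1]^m$, reindexes cells by pairs $(I,J)$ with $I\in\sK_J$, and observes that the cellular chain complex splits as a direct sum over $J$ of the shifted augmented simplicial chain complexes of the~$\sK_J$. Your explicit chain-level splitting via the alternating sums $z_{I,J}$ is a clean way to bypass the spectral-sequence collapse argument; the telescoping identity you sketch does indeed hold (one has $A_i:=\sum_{T}(-1)^{|T|}\sigma_{I\setminus i,(J\setminus i)\setminus T}=z_{I\setminus i,J\setminus i}$ and $B_i:=\sum_{T}(-1)^{|T|}\sigma_{I\setminus i,J\setminus T}=z_{I\setminus i,J}+z_{I\setminus i,J\setminus i}$, whence $A_i-B_i=-z_{I\setminus i,J}$). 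An alternative route, also standard, is the stable homotopy splitting of Bahri--Bendersky--Cohen--Gitler~\cite{b-b-c-g10}, which gives $\Sigma\rk\simeq\bigvee_{J\subset[m]}\Sigma^2|\sK_J|$ and hence the homology isomorphism immediately; your chain-level argument has the advantage of being integral and self-contained without invoking that machinery.
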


%
The cohomology ring structure of $H^*(\rk)$ is described
in~\cite{cai}.

\section{The structure of the commutator subgroups}
By Theorem~\ref{gpfund},
\[
  \Ker\Bigl(\mb G^\sK\to\prod_{k=1}^m G_k\Bigr)=\pi_1((E\mb G,\mb
  G)^\sK).
\]
In the case of right-angled Artin or Coxeter groups (or, more
generally, when each $G_k$ is abelian), the group above is the
commutator subgroup $(\mb G^\sK){\vphantom{\bigr)}}'$. We want to
study the group $\pi_1((E\mb G,\mb  G)^\sK)$, identify the class
of simplicial complexes $\sK$ for which this group is free, and
describe a minimal generator set.

We shall need the following modification of a result of Grbi\'c
and Theriault~\cite{gr-th16}:

\begin{proposition}\label{attach}
Let $\sK=\sK_1\cup_I\sK_2$ be a simplicial complex obtained by
gluing $\sK_1$ and $\sK_2$ along a common face~$I$, which may be
empty. If the polyhedral products $(E\mb G,\mb G)^{\sK_1}$ and
$(E\mb G,\mb G)^{\sK_2}$ are homotopy equivalent to wedges of
circles, then $(E\mb G,\mb G)^{\sK}$ is also homotopy equivalent
to a wedge of circles.
\end{proposition}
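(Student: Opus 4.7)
The plan is to present $(E\mb G,\mb G)^\sK$ as a homotopy pushout of two disjoint unions of wedges of circles along a discrete space, and then to use connectedness to conclude that the pushout is itself a wedge of circles.

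First I would set up notation and decompose. Let $V_j$ be the vertex set of $\sK_j$, so $V_1\cap V_2=I$ and $V=V_1\cup V_2$ is the vertex set of $\sK$; set $W_j=V_j\setminus I$. Unwinding Construction~\ref{polpr} gives the decomposition
\[
  (E\mb G,\mb G)^\sK=A_1\cup A_2,\qquad A_j=(E\mb G,\mb G)^{\sK_j}\times\prod_{k\in W_{3-j}}G_k,
\]
viewed as subspaces of $\prod_{k\in V}EG_k$. Because the common faces of $\sK_1$ and $\sK_2$ are precisely the subsets of $V_1\cap V_2=I$, a short direct calculation yields
\[
  A_1\cap A_2=\prod_{k\in I}EG_k\times\prod_{k\in W_1\cup W_2}G_k,
\]
which, since each $EG_k$ is contractible, is homotopy equivalent to the discrete space $\prod_{k\in W_1\cup W_2}G_k$ (the case $I=\varnothing$ simply gives $\prod_{k\in V}G_k$).

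Next, the inclusions $A_1\cap A_2\hookrightarrow A_j$ are closed cofibrations, being subcomplex inclusions in the natural CW structure inherited from $\prod_{k\in V}EG_k$, so the strict pushout $A_1\cup_{A_1\cap A_2}A_2$ will represent the homotopy pushout. Invoking homotopy invariance I would replace each $A_j$ by a disjoint union of $\bigl|\prod_{k\in W_{3-j}}G_k\bigr|$ copies of a wedge of circles (using the hypothesis on $\sK_j$), and $A_1\cap A_2$ by the discrete set $\prod_{k\in W_1\cup W_2}G_k$. A homotopy pushout of $1$-dimensional CW complexes over a $0$-dimensional one is again $1$-dimensional, so $(E\mb G,\mb G)^\sK$ will have the homotopy type of a graph.

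Finally I would check connectedness. The components of $A_j$ are indexed by $\prod_{k\in W_{3-j}}G_k$, while a point of $A_1\cap A_2$ corresponding to $(g_{W_1},g_{W_2})$ lies simultaneously in the $g_{W_2}$-component of $A_1$ and the $g_{W_1}$-component of $A_2$; fixing one label and varying the other glues every component of $A_1$ to every component of $A_2$, so the pushout is connected. A connected $1$-dimensional CW complex has a tree as universal cover, hence vanishing higher homotopy and free fundamental group, so it is homotopy equivalent to a wedge of circles. The only point needing real attention will be confirming that the literal union $A_1\cup A_2$ represents the homotopy pushout; this reduces to standard cofibrancy of polyhedral-product subcomplexes, and the degenerate cases $I=\varnothing$ or $W_j=\varnothing$ are absorbed without change.
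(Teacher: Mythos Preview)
Your argument is correct and takes a genuinely different route from the paper's. The paper quotes the Grbi\'c--Theriault splitting \cite[Theorem~6.12]{gr-th16}
\[
  (C\mb X,\mb X)^{\sK}\simeq(M_1\mathbin\ast M_2)\vee
  \bigl((C\mb X,\mb X)^{\sK_1}\rtimes M_2\bigr)\vee
  \bigl(M_1\ltimes (C\mb X,\mb X)^{\sK_2}\bigr)
\]
as a black box and then simply observes that, since each $G_k$ is a discrete pointed set, each of the three summands is a wedge of circles. You instead decompose $(E\mb G,\mb G)^{\sK}$ as a homotopy pushout $A_1\cup_{A_1\cap A_2}A_2$ with $A_1\cap A_2$ homotopy discrete, and argue directly that a homotopy pushout of $1$-dimensional complexes over a $0$-dimensional one is again a graph, then check connectedness via the surjectivity of the two projections $\prod_{k\in W_1\cup W_2}G_k\to\prod_{k\in W_{3-j}}G_k$ on~$\pi_0$. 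Your approach is more elementary and self-contained: it avoids invoking the nontrivial splitting theorem and uses only the cofibrancy of polyhedral-product subcomplexes and the basic fact that connected graphs are wedges of circles. The price is that you do not obtain the explicit wedge decomposition that the cited formula provides; for the purposes of Theorem~\ref{ragws} this loss is immaterial. One small point worth making explicit in your write-up is that the replacement of $A_1\cap A_2$ by a discrete set is compatible with the replacement of each $A_j$: since the source is homotopy discrete, the induced maps into the $A_j$ are determined up to homotopy by their effect on~$\pi_0$, so the homotopy pushout is well defined.
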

\begin{proof}
We may assume that $\sK$ has the vertex set~$[m]=\{1,\ldots,m\}$,
$\sK_1$ is the full subcomplex of $\sK$ on the first $m_1$
vertices $\{1,\ldots,m_1\}$, $\sK_2$ is the full subcomplex of
$\sK$ on the last $m_2$ vertices $\{m-m_2+1,\ldots,m\}$, and the
common face $I$ is on the $k$ vertices $\{m_1-k+1,\ldots,m_1\}$,
where $m_1<m$, $m_2<m$ and $m=m_1+m_2-k$. Consider  the polyhedral
product $(C\mb X,\mb X)^\sK$ corresponding to a sequence of pairs
$(C\mb X,\mb X)=\{(CX_1,X_1),\ldots,(CX_m,X_m)\}$, where $CX_i$
denotes the cone on~$X_i$. According
to~\cite[Theorem~6.12]{gr-th16},
\begin{equation}\label{gtdec}
  (C\mb X,\mb X)^\sK\simeq(M_1\mathbin\ast M_2)\vee
  ((C\mb X,\mb X)^{\sK_1}\rtimes M_2)\vee
  (M_1\ltimes (C\mb X,\mb X)^{\sK_2}),
\end{equation}
where $M_1=\prod_{i=1}^{m_1}X_i$, $M_2=\prod_{i=m-m_2+1}^m X_i$,
$M_1\mathbin\ast M_2$ denotes the \emph{join} of $M_1$ and $M_2$,
$X\rtimes Y$  denotes the \emph{right half-smash} $X\times
Y/\pt\times Y$ of two pointed spaces $X,Y$, and $X\ltimes Y$
denotes their \emph{left half-smash} $X\times Y/X\times\pt$.

In our case, each $X_i=G_i$ is a discrete space, the pair
$(EG_i,G_i)$ is homotopy equivalent to $(CG_i,G_i)$, and each of
$M_1,M_2$ in~\eqref{gtdec} is a discrete space. Hence, each of the
three wedge summands in~\eqref{gtdec} is a wedge of circles, and
so is $(E\mb G,\mb G)^{\sK}$.
\end{proof}

A graph $\Gamma$ is called \emph{chordal} (or \emph{triangulated})
if each of its cycles with $\ge 4$ vertices has a chord (an edge
joining two vertices that are not adjacent in the cycle).

The following result gives an alternative characterisation of
chordal graphs.

\begin{theorem}[Fulkerson--Gross~\cite{fu-gr65}]
A graph is chordal if and only if its vertices can be ordered in
such a way that, for each vertex~$i$, the lesser neighbours of~$i$
form a clique.
\end{theorem}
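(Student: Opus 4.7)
The plan is to prove the two implications separately. The ``if'' direction is a short direct argument, while the ``only if'' direction reduces by induction to the classical fact that every chordal graph has a \emph{simplicial vertex} (a vertex whose neighbourhood induces a clique). The main obstacle will be proving this simplicial vertex lemma (due to Dirac), which I would handle via minimal vertex separators.

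For the ``if'' direction, assume the vertices are ordered $1,\ldots,m$ so that the lesser neighbours of each vertex form a clique, and let $C$ be a cycle in $\Gamma$ on at least four vertices. Let $i$ be the maximum vertex of $C$; its two cycle-neighbours $j$ and $k$ are strictly less than $i$ and adjacent to $i$, so by hypothesis $\{j,k\}$ is an edge of $\Gamma$. Since $|C|\geq 4$, the vertices $j$ and $k$ are not consecutive in $C$, so $\{j,k\}$ is a chord.

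For the ``only if'' direction, I would induct on $m$. Granting the simplicial vertex lemma, let $v$ be simplicial in $\Gamma$ and assign it the label $m$. The induced subgraph $\Gamma\setminus v$ is again chordal (chordality is preserved under passing to induced subgraphs), so by induction it admits an ordering $1,\ldots,m-1$ with the required clique property. Extending by placing $v$ last works: the lesser neighbours of $v$ are exactly $N(v)$, which is a clique because $v$ is simplicial, while for every other vertex $w$ the set of lesser neighbours is unchanged from $\Gamma\setminus v$.

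It then remains to prove the simplicial vertex lemma, and to make the induction go through cleanly I would prove the stronger statement that every non-complete chordal graph has two non-adjacent simplicial vertices, by induction on~$m$. If $\Gamma$ is complete, any vertex is simplicial. Otherwise pick non-adjacent $a,b\in V(\Gamma)$ and a minimal $a$-$b$ separator $S\subset V(\Gamma)$, and let $A$ and $B$ be the vertex sets of the $a$- and $b$-components of $\Gamma\setminus S$. The crucial sub-claim is that $S$ induces a clique: if $u,v\in S$ were non-adjacent, minimality of $S$ would force each of $u$ and $v$ to have neighbours in both $A$ and~$B$, and joining these up by shortest paths inside $A$ and inside $B$ would produce a chordless cycle of length $\geq 4$, contradicting chordality. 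Applying the induction hypothesis to the strictly smaller chordal subgraphs $\Gamma[A\cup S]$ and $\Gamma[B\cup S]$ yields simplicial vertices lying in $A$ and in $B$ respectively; each is simplicial in all of $\Gamma$ because its neighbourhood lies in $A\cup S$ or in $B\cup S$, and the two are non-adjacent because separated by~$S$, completing the induction.
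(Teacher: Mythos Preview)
The paper does not contain a proof of this statement: it is quoted as a known result of Fulkerson and Gross and used as a black box in the proof of Theorem~\ref{ragws}. So there is nothing in the paper to compare your argument against.

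That said, your proof is the standard one (the ``only if'' direction via Dirac's simplicial vertex lemma) and is essentially correct. One point deserves to be made explicit. When you apply the induction hypothesis to $\Gamma[A\cup S]$ you assert that it ``yields simplicial vertices lying in $A$'', but the induction hypothesis as stated only guarantees two non-adjacent simplicial vertices somewhere in $\Gamma[A\cup S]$, or a single simplicial vertex if $\Gamma[A\cup S]$ is complete. You should say why one of them can be taken in~$A$: if $\Gamma[A\cup S]$ is complete then any vertex of $A$ is simplicial; otherwise the two non-adjacent simplicial vertices produced by induction cannot both lie in the clique~$S$, so at least one of them lies in~$A$. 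With this sentence added, the argument is complete.
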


Such an ordering of vertices is called a \emph{perfect elimination
ordering}.

\begin{theorem}\label{ragws}
Let $\sK$ be a flag simplicial complex on~$m$ vertices, let $\mb
G=(G_1,\ldots,G_m)$ be a sequence of $m$ nontrivial groups, and
let $\mb G^{\sK}$ be the graph product group given
by~\eqref{ragcolimit}. The following conditions are equivalent:
\begin{itemize}
\item[(a)] $\Ker(\mb G^\sK\to\prod_{k=1}^m G_k)$ is a free group;
\item[(b)] $(E\mb G,\mb G)^\sK$ is homotopy equivalent to a wedge of circles;
\item[(c)] $\Gamma=\sK^1$ is a chordal graph.
\end{itemize}
\end{theorem}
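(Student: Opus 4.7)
The equivalence (a) $\Leftrightarrow$ (b) follows immediately from Theorem~\ref{gpfund}: since $\sK$ is flag, $(E\mb G,\mb G)^\sK$ is aspherical with fundamental group $\Ker(\mb G^\sK\to\prod_k G_k)$, and a connected aspherical CW complex has free fundamental group if and only if it is homotopy equivalent to a wedge of circles (the $K(F,1)$ of a free group~$F$).

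For (c) $\Rightarrow$ (b), I would exploit the Fulkerson--Gross characterisation and fix a perfect elimination ordering $v_1,\ldots,v_m$. Writing $\sK_{[i]}$ for the full subcomplex of~$\sK$ on $\{v_1,\ldots,v_i\}$ and $C_i$ for the clique of earlier neighbours of~$v_i$, flagness of~$\sK$ forces
\[
  \sK_{[i]}=\sK_{[i-1]}\cup_{C_i}\varDelta^{C_i\cup\{v_i\}},
\]
with gluing along the simplex~$C_i$. The polyhedral product of a simplex is a product of cones, hence contractible (and trivially a wedge of zero circles), so Proposition~\ref{attach} (whose statement explicitly allows an empty common face, needed when $C_i=\varnothing$) applies at each step of an induction started at $\sK_{[1]}=\{v_1\}$, for which $(E\mb G,\mb G)^{\sK_{[1]}}\simeq\pt$.

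For the remaining implication I would argue the contrapositive: assuming $\Gamma$ is not chordal, I would show that the kernel in~(a) is not free. A non-chordal $\Gamma$ has an induced cycle on some vertex set $J$ of length $n\ge 4$, and flagness makes $\sK_J=C_n$ a full subcomplex. By Proposition~\ref{ppret}, $(E\mb G_J,\mb G_J)^{C_n}$ is a retract of $(E\mb G,\mb G)^\sK$, so on fundamental groups the cycle kernel $\Ker(\mb G_J^{C_n}\to\prod_{j\in J}G_j)$ embeds as a subgroup of the kernel in~(a); since subgroups of free groups are free by Nielsen--Schreier, it suffices to prove the cycle kernel is not free. The case $n=4$ is painless: $C_4=\{v_1,v_3\}\ast\{v_2,v_4\}$ as a simplicial join, so $\mb G^{C_4}\cong(G_{v_1}\star G_{v_3})\times(G_{v_2}\star G_{v_4})$, and the kernel splits as a product of two nontrivial free-product-to-direct-product kernels; each contains an infinite-order commutator $[g,g']$ (cyclically reduced of length four in a free product), so the kernel contains $\Z^2$, which is not a subgroup of any free group.

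The main obstacle is the case $n\ge 5$, where non-adjacent commutators no longer commute and the direct $\Z^2$ argument fails. The plan here is to establish $H_2\bigl((E\mb G,\mb G)^{C_n}\bigr)\ne 0$; since the space is aspherical, this rules out a free fundamental group (as $K(F,1)$ for a free group~$F$ has trivial $H_2$). I would pursue this via a Bahri--Bendersky--Cohen--Gitler-type stable splitting of the polyhedral product, analogous to the decomposition underlying Theorem~\ref{homrk}: the summand indexed by the full subset $J=[n]$ contributes a nonzero direct summand built from $\widetilde H_1(|C_n|)\cong\Z$ and a nontrivial smash product of the pointed discrete sets~$G_i$, and the technical core of this direction is verifying that this summand survives into $H_2$ of the polyhedral product without cancellation.
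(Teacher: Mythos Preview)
Your arguments for (a)$\Leftrightarrow$(b) and (c)$\Rightarrow$(b) are correct and essentially match the paper's (the paper only records (b)$\Rightarrow$(a) explicitly and closes the cycle of implications, but the content is the same, including the perfect-elimination induction feeding into Proposition~\ref{attach}).

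The implication (a)$\Rightarrow$(c) is where you diverge from the paper, and where your proposal is both more laborious and left unfinished. The paper avoids your case split $n=4$ versus $n\ge 5$ and the BBCG machinery entirely by a single observation: since each $G_k$ is a nontrivial \emph{discrete} space, one can choose a two-point subset $S^0\hookrightarrow G_k$ and a retraction $G_k\to S^0$ (no group structure needed on the retraction). This extends to a retraction of cones $(CG_k,G_k)\to(D^1,S^0)$, hence to a retraction of polyhedral products
\[
  (E\mb G,\mb G)^\sK\simeq(C\mb G,\mb G)^\sK\longrightarrow(D^1,S^0)^\sK=\rk.
\]
Composing with your retraction onto the full subcomplex~$\sK_J$ one obtains a retraction onto $\mathcal R_{\sK_J}=\mathcal R_{C_n}$, which is a closed orientable surface of genus $(n-4)2^{n-3}+1\ge 1$ for every $n\ge 4$. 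Its fundamental group is a surface group, hence not free, and embeds (via the retraction) into $\pi_1((E\mb G,\mb G)^\sK)$. This is a one-line reduction that handles all $n\ge 4$ at once.

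Your $H_2$ plan for $n\ge 5$ is not wrong --- the BBCG stable splitting does produce, from the summand indexed by the full vertex set, a wedge of $2$-spheres (the join of $|C_n|\simeq S^1$ with the smash of the nontrivial pointed $0$-dimensional $G_i$), and in a stable splitting there is no ``cancellation'' to worry about. But it is heavier than necessary, and as written it is a sketch rather than a proof. The retraction-to-$\rk$ trick is the missing idea; note that it also subsumes your separate $n=4$ argument.
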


\begin{proof}
(b)$\Rightarrow$(a) This follows from Theorem~\ref{gpfund}~(d) and
the fact that the fundamental group of a wedge of circles is
free.

\smallskip

(c)$\Rightarrow$(b) Here we use the argument
from~\cite[Theorem~4.6]{g-p-t-w16}. However, that argument
contained an inaccuracy, which was pointed out by A.~Gaifullin and
corrected in the argument below.

Assume that the vertices of $\mathcal K$ are in perfect
elimination order. We assign to each vertex $i$ the clique $I_i$
consisting of $i$ and the lesser neighbours of $i$. Since $\sK$ is
a flag complex, each clique $I_i$ is a face. All maximal faces are
among $I_1,\ldots,I_m$, so we have $\bigcup_{i=1}^m I_i=\sK$.
Furthermore, for each $k=1,\ldots,m$ the perfect elimination
ordering on $\sK$ induces such an ordering on the full subcomplex
$\sK_{\{1,\ldots,k-1\}}$, so we have
$\bigcup_{i=1}^{k-1}I_i=\sK_{\{1,\ldots,k-1\}}$. In particular,
the simplicial complex $\bigcup_{i=1}^{k-1}I_i$ is flag as a full
subcomplex in a flag complex. The intersection
$I_k\cap\bigcup_{i=1}^{k-1}I_i$ is a clique, so it is a face of
$\bigcup_{i=1}^{k-1}I_i$. An inductive argument using
Proposition~\ref{attach} then shows that $(E\mb G,\mb G)^\sK$ is a
wedge of circles.

\smallskip

(a)$\Rightarrow$(c) Let $\Ker(\mb G^\sK\to\prod_{k=1}^m G_k)$ be a
free group. Suppose that the graph $\Gamma=\sK^1$ is not chordal,
and choose a chordless cycle $J$ with $|J|\ge4$. Then the full
subcomplex $\sK_J$ is the same cycle (the boundary of
a~$|J|$-gon).

We first consider the case when each $G_k$ is $\Z_2$, so that
$(E\mb G,\mb G)^\sK$ is $\rk$. Then $\mathcal R_{\mathcal K_J}$ is
homeomorphic to a closed orientable surface of genus
$(|J|-4)2^{|J|-3}+1$ by~\cite[Proposition~4.1.8]{bu-pa15}. In
particular, the fundamental group $\pi_1(\mathcal R_{\mathcal
K_J})$ is not free. On the other hand, $\mathcal R_{\mathcal K_J}$
is a retract of $\rk$ by Proposition~\ref{ppret}, so
$\pi_1(\mathcal R_{\mathcal K_J})$ is a subgroup of the free group
$\pi_1(\rk)=\Ker(\racg_\sK\to(\Z_2)^m)$. A contradiction.

Now consider the general case. Note that the pair $(EG_k,G_k)$ is
homotopy equivalent to $(CG_k,G_k)$, so we can consider $(C\mb
G,\mb G)^\sK$ instead of $(E\mb G,\mb G)^\sK$. Since each $G_k$ is
discrete and nontrivial, we may fix an inclusion of a pair of
points $S^0\hookrightarrow G_k$; then there is a retraction
$G_k\to S^0$ (it does not have to be a homomorphism of groups). It
extends to a retraction of cones, so we have a retraction of pairs
$(CG_k,G_k)\to(D^1,S^0)$. These retractions give rise to a
retraction of polyhedral products $(C\mb G,\mb
G)^\sK\to(D^1,S^0)^\sK=\rk$. Hence, we have a composite retraction
$(C\mb G,\mb G)^\sK\to\rk\to\mathcal R_{\mathcal K_J}$, so
$\pi_1(\mathcal R_{\mathcal K_J})$ includes as a subgroup in the
free group $\pi_1(E\mb G,\mb G)^\sK=\Ker(\mb G^\sK\to\prod_{k=1}^m
G_k)$. On the other hand, if $\sK^1$ contains a chordless cycle
$J$ with $|J|\ge4$, then $\pi_1(\mathcal R_{\mathcal K_J})$ is the
fundamental group of a surface of positive genus, so it is not
free. A contradiction.
\end{proof}

\begin{corollary}\label{commrag}
Let $\raag_\sK$ and $\racg_\sK$ be the right-angled Artin and
Coxeter groups corresponding to a simplicial complex~$\sK$.
\begin{itemize}
\item[(a)] The commutator subgroup $\raag'_\sK$ is free if
and only if $\sK^1$ is a chordal graph.
\item[(b)] The commutator subgroup $\racg'_\sK$ is free if
and only if $\sK^1$ is a chordal graph.
\end{itemize}
\end{corollary}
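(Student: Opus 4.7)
The plan is to deduce Corollary~\ref{commrag} as a direct specialization of Theorem~\ref{ragws}, by taking $G_k=\Z$ in part~(a) and $G_k=\Z_2$ in part~(b). The main point to verify is that in these two cases the kernel appearing in Theorem~\ref{ragws}~(a) coincides with the commutator subgroup.

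First I would address the hypothesis ``$\sK$ is flag'' required by Theorem~\ref{ragws}. Observe that, as noted in the discussion preceding Proposition~\ref{ragfib} (and seen directly from the presentations~\eqref{raag} and~\eqref{racg}), the groups $\raag_\sK$ and $\racg_\sK$ depend only on the one-skeleton $\sK^1$. Hence, without loss of generality, I may replace $\sK$ by the flag (clique) complex on $\sK^1$: this does not change $\raag_\sK$ or $\racg_\sK$, does not change the one-skeleton~$\sK^1$, and does not affect the chordality condition on~$\sK^1$. So for the purposes of the corollary we may assume $\sK$ is flag.

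Next, since $\Z$ and $\Z_2$ are abelian, the canonical projections $\raag_\sK\to\prod_{k=1}^m\Z=\Z^m$ and $\racg_\sK\to\prod_{k=1}^m\Z_2=\Z_2^m$ are abelianization homomorphisms (the target is clearly abelian, and it satisfies the universal property of the abelianization because of the presentations~\eqref{raag} and~\eqref{racg}, where the only relations one needs to add to get an abelian group are the already-present commutation relations plus pairwise commutation of generators across missing edges). Consequently their kernels coincide with the commutator subgroups $\raag'_\sK$ and $\racg'_\sK$ respectively. Applying Theorem~\ref{ragws} (the equivalence (a)$\Leftrightarrow$(c)) with $\mb G=(\Z,\ldots,\Z)$ yields part~(a) of the corollary, and applying it with $\mb G=(\Z_2,\ldots,\Z_2)$ yields part~(b).

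There is no real obstacle here; the only subtlety is the innocuous one just noted about replacing $\sK$ by its flag completion, so that Theorem~\ref{ragws} applies. Everything else is a direct translation of the general graph-product statement to the two special cases.
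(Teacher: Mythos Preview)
Your proposal is correct and matches the paper's intended derivation: the corollary is stated without proof, as an immediate specialization of Theorem~\ref{ragws} to $G_k=\Z$ and $G_k=\Z_2$, using the observation (made just before Corollary~\ref{artfund}) that for abelian $G_k$ the kernel of $\mb G^\sK\to\prod_k G_k$ is the commutator subgroup. Your remark about passing to the flag completion of $\sK^1$ to meet the hypothesis of Theorem~\ref{ragws} is a genuine point the paper leaves implicit, and you handle it correctly.
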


Part (a) of Corollary~\ref{commrag} is the result of Servatius,
Droms and Servatius~\cite{s-d-s89}. The difference between parts
(a) and (b) is that the commutator subgroup $\raag'_\sK$ is
infinitely generated, unless $\raag_\sK=\Z^m$, while the
commutator subgroup $\racg'_\sK$ is finitely generated. We
elaborate on this in the next theorem.

Let $(g,h)=g^{-1}h^{-1}gh$ denote the group commutator of two
elements $g,h$.

\begin{theorem}\label{gscox}
Let $\racg_\sK$ be the right-angled Coxeter group corresponding to
a simplicial complex~$\sK$ on $m$ vertices. The commutator
subgroup $\racg'_\sK$ has a finite minimal generator set
consisting of $\sum_{J\subset[m]}\rank\widetilde H_0(\sK_J)$
iterated commutators
\begin{equation}\label{commuset}
  (g_j,g_i),\quad (g_{k_1},(g_j,g_i)),\quad\ldots,\quad
  (g_{k_1},(g_{k_2},\cdots(g_{k_{m-2}},(g_j,g_i))\cdots)),
\end{equation}
where $k_1<k_2<\cdots<k_{\ell-2}<j>i$, $k_s\ne i$ for any~$s$,
and $i$ is the smallest vertex in a connected component not
containing~$j$ of the subcomplex
$\sK_{\{k_1,\ldots,k_{\ell-2},j,i\}}$.
\end{theorem}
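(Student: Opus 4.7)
The plan is to produce a generating set of $\racg'_\sK$ of the prescribed commutator form whose cardinality matches the rank of the abelianisation, from which minimality follows automatically. By Corollary~\ref{coxfund}(d), $\racg'_\sK \cong \pi_1(\rk)$, so its abelianisation is $H_1(\rk;\Z)$; Theorem~\ref{homrk} identifies this with $\bigoplus_J \widetilde H_0(\sK_J)$, of rank $N := \sum_{J \subset [m]} \rank \widetilde H_0(\sK_J)$. Hence $N$ is a lower bound for the minimum number of generators. On the other hand, the iterated commutators in the list are parameterised by pairs $(J', C)$, where $J' \subset [m]$ with $|J'| \ge 2$ and $C$ is a connected component of $\sK_{J'}$ not containing $j := \max J'$: one forms the commutator with innermost pair $(g_j, g_i)$ for $i := \min C$, surrounded from outside in by the $g_k$ for $k \in J' \setminus \{i,j\}$ in increasing order. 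For fixed $J'$ the number of admissible components is $c(\sK_{J'}) - 1 = \rank \widetilde H_0(\sK_{J'})$, so summing over $J'$ shows the list has exactly $N$ elements; once it is shown to generate, it must be a minimal generator set.

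To show that the list generates $\racg'_\sK$, I would proceed by induction on $m$. The decomposition $\sK = \sK_{[m-1]} \cup \st_\sK(m)$ with intersection $\link_\sK(m)$ produces the amalgamated free product
\[
  \racg_\sK \cong \racg_{\sK_{[m-1]}} \mathbin\star_{\racg_{\link_\sK(m)}} \bigl(\racg_{\link_\sK(m)} \times \Z_2\bigr).
\]
I would then apply the Reidemeister--Schreier procedure to the surjection $\racg_\sK \to (\Z_2)^m$, using the natural transversal of ordered square-free products $g_{i_1} \cdots g_{i_k}$ with $i_1 < \cdots < i_k$. The resulting generators that do not involve $g_m$ reduce, by the inductive hypothesis applied to $\sK_{[m-1]}$, to the listed commutators with $j < m$. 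The remaining generators are words of the shape $w g_m w g_m$ with $w$ a transversal element in $g_1,\ldots,g_{m-1}$; applying the relations $g_i^2 = 1$ and $g_ig_j = g_jg_i$ for $\{i,j\} \in \sK$, together with the Hall--Witt identities for iterated group commutators, each such word can be rewritten as a product of nested commutators
\[
  (g_{k_1}, (g_{k_2}, \cdots, (g_{k_{\ell-2}}, (g_m, g_i)) \cdots))
\]
with $k_1 < \cdots < k_{\ell-2} < m$ and $i$ the smallest vertex of a component of $\sK_{\{k_1,\ldots,k_{\ell-2}, m, i\}}$ not containing $m$. Combined with the inductive list, this yields precisely the commutators of the theorem.

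The principal obstacle lies in this reduction: showing that every Reidemeister--Schreier generator $w g_m w g_m$ is reducible, modulo the defining relations of $\racg_\sK$, to a product of iterated commutators of the prescribed canonical form, and that the component condition on $i$ is precisely what distinguishes a non-trivial from a trivial such word. The counting argument of the first paragraph is essential here: it guarantees that the generating set so produced has the minimal possible size, so we never need to verify non-redundancy by hand---the list is automatically a \emph{minimal} generating set as soon as it is shown to generate.
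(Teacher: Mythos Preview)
Your counting argument in the first paragraph is correct and is exactly how the paper establishes minimality: the abelianisation of $\racg'_\sK\cong\pi_1(\rk)$ is $H_1(\rk)$, which has rank $N=\sum_J\rank\widetilde H_0(\sK_J)$ by Theorem~\ref{homrk}, and your bijection between the listed commutators and pairs $(J',C)$ with $C$ a component of $\sK_{J'}$ not containing $\max J'$ is the right way to see that the list has exactly $N$ elements.

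For the generation part your route differs from the paper's. The paper does not induct on~$m$ or invoke Reidemeister--Schreier. Instead it first treats the case where $\sK$ is $m$ disjoint points (so $\racg_\sK=\Z_2^{\star m}$) as a separate lemma, producing a free basis of the required form by explicit commutator calculus. The key device is a ``swap'' identity
\[
  (g_q,(g_p,x))=(g_q,x)(x,(g_p,g_q))(g_q,g_p)(x,g_p)(g_p,(g_q,x))(x,g_q)(g_p,g_q)(g_p,x),
\]
which, combined with $g_i^2=1$, allows one to reorder the outer letters of a nested commutator modulo shorter ones and to eliminate repeated letters. For general $\sK$ the paper then runs a path argument inside $\sK_J$: if $i$ and $j$ lie in the same component of $\sK_J$, one walks along an edge-path and uses the swap identity (simplified by the commutation relations along the path) to express the corresponding commutator through shorter ones; a similar argument shows that two choices of $i$ in the same component give equivalent commutators modulo shorter ones.

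Your proposal has a genuine gap precisely at the step you flag as the ``principal obstacle''. First, the description of the Reidemeister--Schreier generators is not right: with the ordered-monomial transversal and $g=g_m$, every RS generator $t\,g_m\,\overline{tg_m}^{\,-1}$ is already trivial, since $m$ is maximal (either $\overline{tg_m}=tg_m$, or $t=t'g_m$ and $g_m^2=1$ collapses it). The RS generators that genuinely involve $g_m$ come from $t$ containing $g_m$ together with $g=g_i$ for $i<m$, and they do not have the simple shape $w g_m w g_m$. Second, and more importantly, even with the correct RS generators in hand, rewriting them as products of canonical nested commutators satisfying the ordering $k_1<\cdots<k_{\ell-2}<j>i$ and the connectivity condition on $i$ is the whole content of the theorem. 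Invoking ``Hall--Witt identities'' does not by itself give you the reordering of outer letters, the elimination of repeated letters via $g_i^2=1$, or the reduction when $i$ and $j$ share a component of $\sK_J$; these require exactly the swap-and-path machinery the paper develops, and your inductive/amalgamated-product framework does not bypass that work.
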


Theorem~\ref{gscox} is similar to a result of~\cite{g-p-t-w16}
describing the commutator subalgebra of the graded Lie algebra
given by
\begin{equation}\label{liek}
  L_\sK=\mathop{\mbox{\textit{FL}}}\langle u_{1},\ldots,u_{m}\rangle
  \big/\bigl([u_i,u_i]=0,\; [u_i,u_j]=0\text{ for }\{i,j\}\in
  \sK\bigr),
\end{equation}
where $\mathop{\mbox{\textit{FL}}}\langle
u_{1},\ldots,u_{m}\rangle$ is the free graded Lie algebra on
generators $u_i$ of degree one, and $[a,b]=-(-1)^{|a||b|}[b,a]$
denotes the graded Lie bracket. The commutator subalgebra is the
kernel of the Lie algebra homomorphism
$L_\sK\to\mathop{\mbox{\textit{CL}}}\langle
u_{1},\ldots,u_{m}\rangle$ to the commutative (trivial) Lie
algebra.

The graded Lie algebra~\eqref{liek} is a graph product similar to
the right-angled Coxeter group $\racg_\sK$. It has a colimit
decomposition similar to~\eqref{ragcolimit}, with each $G_i$
replaced by the trivial Lie algebra
$\mathop{\mbox{\textit{CL}}}\langle
u\rangle=\mathop{\mbox{\textit{FL}}}\langle u\rangle/([u,u]=0)$
and the colimit taken in the category of graded Lie algebras.

\begin{theorem}[{\cite[Theorem~4.3]{g-p-t-w16}}]\label{multgen}
The commutator subalgebra of the graded Lie algebra $L_\sK$ has a
finite minimal generator set consisting of
$\sum_{J\subset[m]}\rank\widetilde H_0(\sK_J)$ iterated
commutators
\[
  [u_j,u_i],\quad [u_{k_1},[u_j,u_i]],\quad\ldots,\quad
  [u_{k_1},[u_{k_2},\cdots[u_{k_{m-2}},[u_j,u_i]]\cdots]],
\]
where $k_1<k_2<\cdots<k_{\ell-2}<j>i$, $k_s\ne i$ for any~$s$, and
$i$ is the smallest vertex in a connected component not
containing~$j$ of the subcomplex
$\sK_{\{k_1,\ldots,k_{\ell-2},j,i\}}$.
\end{theorem}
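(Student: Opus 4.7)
The plan is to transport the combinatorial scheme used for the right-angled Coxeter group in Theorem~\ref{gscox} into the graded Lie setting, replacing group commutators by graded Lie brackets and the abelianisation kernel by the derived subalgebra $L'_\sK = [L_\sK, L_\sK]$. The $\Z^m$-multigrading on $L_\sK$ induced by assigning $u_i$ the $i$-th standard multi-weight is compatible with the defining relations $[u_i,u_i]=0$ and $[u_i,u_j]=0$, and restricts to a multigrading on $L'_\sK$ supported on multi-indices $\alpha$ that are nonzero in at least two coordinates. A minimal generator set of $L'_\sK$ corresponds to a $\k$-basis of the module of indecomposables $Q(L'_\sK) = L'_\sK/[L'_\sK,L'_\sK]$, so the task is to compute $Q(L'_\sK)$ multidegree by multidegree.

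First, for each $J = \{k_1 < \cdots < k_{\ell-2} < j\}\cup\{i\} \subset [m]$ with $j = \max J$ and $i$ lying in a connected component $C$ of $\sK_J$ not containing $j$, the iterated commutator
\[
  [u_{k_1},[u_{k_2},\cdots[u_{k_{\ell-2}},[u_j,u_i]]\cdots]]
\]
is homogeneous of multi-weight $\mathbf 1_J$ (ones on coordinates in $J$, zeros elsewhere) and lies in $L'_\sK$. I would verify these elements are nonzero by embedding $L_\sK$ into its universal enveloping algebra $U(L_\sK)$, which by the graded PBW theorem admits an explicit basis; multidegree considerations then show that their images in $Q(L'_\sK)$ can only interact within a fixed $J$.

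The main step is generation: induct on $|\alpha|$ and prove that $(L'_\sK)_\alpha$ is spanned modulo $[L'_\sK,L'_\sK]$ by the listed commutators with support $J$. Given any iterated bracket of the letters $u_r$, $r \in J$, the Jacobi identity combined with the edge relations $[u_p,u_q]=0$ for $\{p,q\}\in\sK_J$ allows one to bubble $u_j$ (the maximum) to an outermost position. Inside the remaining nested bracket, any two letters $u_p,u_q$ lying in the same connected component of $\sK_J \setminus \{j\}$ can be interchanged modulo $[L'_\sK,L'_\sK]$ by inserting a chain of Jacobi moves along a path of edges in that component. This reduces any such bracket to one determined by a choice of component $C$ of $\sK_J$ other than the $j$-component, with $i$ taken to be the smallest vertex of~$C$.

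Finally, minimality follows by a dimension count: for fixed $J$, the valid choices of $i$ are in bijection with the connected components of $\sK_J$ other than the one containing $j$, giving $\rank \widetilde H_0(\sK_J)$ elements per $J$ and $\sum_{J\subset[m]}\rank\widetilde H_0(\sK_J)$ in total. To match this with $\dim_{\k} Q(L'_\sK)$, one compares with the multigraded Hilbert series of $U(L_\sK)$, which as a graph-product algebra is intimately tied to the Stanley--Reisner ring of $\sK$, and applies a graded Milnor--Moore-type argument to descend to $L_\sK$ and its indecomposables. The main obstacle is the generation step: formalising the ``swap along a connected path'' reduction requires a careful choice of normal forms for iterated brackets indexed by rooted trees on $J$, and this is precisely where $\widetilde H_0(\sK_J)$ emerges as the obstruction to further reduction.
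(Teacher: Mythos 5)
First, a contextual point: the paper does not prove Theorem~\ref{multgen} at all --- it is quoted from \cite[Theorem~4.3]{g-p-t-w16} as the Lie-algebraic prototype for Theorem~\ref{gscox}, and the only argument actually written out here is for the group-theoretic analogue. Measured against that argument (which follows the same scheme as the proof in \cite{g-p-t-w16}), your generation step is essentially the intended one: reduce to right-normed nested brackets, use the Jacobi identity together with the relations $[u_p,u_q]=0$ for $\{p,q\}\in\sK$ to normalise the order of the letters, and use paths of edges inside $\sK_J$ both to kill brackets in which $i$ and $j$ lie in the same connected component and to identify brackets whose distinguished vertex $i$ varies within one component. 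Two slips in your description: in the normal form the maximal letter $u_j$ ends up \emph{innermost}, in $[u_j,u_i]$, not outermost; and you never argue that generators may be taken square-free, i.e.\ that multidegrees with a repeated letter contribute nothing to the indecomposables --- this needs $[u_i,[u_i,x]]=0$, the exact analogue of the use of $g_i^2=1$ in Lemma~\ref{fpgen}, and should not be omitted.

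The genuine gap is in minimality. Your PBW/multidegree observation only separates candidate generators supported on \emph{different} subsets $J$; it says nothing about the linear independence, within a fixed multidegree $\mathbf{1}_J$, of the $\rank\widetilde H_0(\sK_J)$ candidates in $L'_\sK/[L'_\sK,L'_\sK]$, and that is precisely the nontrivial half of the count (generation only gives an upper bound). The appeal to ``the multigraded Hilbert series of $U(L_\sK)$ \ldots\ and a graded Milnor--Moore-type argument'' does not close this: to extract $\rank\widetilde H_0(\sK_J)$ from that Hilbert series one needs exactly the Hochster-type decomposition of $\Tor$ of the Stanley--Reisner ring (equivalently of $H_*(\zk)$) into $\bigoplus_{J\subset[m]}\widetilde H_*(\sK_J)$, together with an identification of its bottom-degree part with the indecomposables of~$L'_\sK$. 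This is the counterpart of the step in the proof of Theorem~\ref{gscox} where minimality is forced by $H_1(\rk)\cong\racg'_\sK/\racg''_\sK\cong\bigoplus_{J\subset[m]}\widetilde H_0(\sK_J)$ via Theorem~\ref{homrk}; until you supply the corresponding input on the Lie side, your argument establishes generation but not minimality.
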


Although the scheme of the proof of Theorem~\ref{gscox} is similar
to that for Theorem~\ref{multgen}, more specific techniques are
required to work with group commutators, as opposed to Lie algebra
brackets. Nevertheless, most of these techniques are quite
standard, and can be extracted from the classical texts
like~\cite{m-k-s76}.

\begin{proof}[Proof of Theorem~\ref{gscox}]
The first part is a standard argument applicable to the commutator
subgroup of an arbitrary group. An element of $\racg'_\sK$ is a
product of commutators $(a,b)$ with $a,b\in\racg_\sK$. Writing
each of $a,b$ as a word in the generators $g_1,\ldots,g_m$ and
using the Hall identities
\begin{equation}\label{hall}
\begin{aligned}
(a,bc)&=(a,c)(a,b)((a,b),c),\\
(ab,c)&=(a,c)((a,c),b)(b,c),
\end{aligned}
\end{equation}
we express each element of $\racg'_\sK$ in terms of iterated
commutators $(g^{n_{i_1}}_{i_1},\ldots,g^{n_{i_\ell}}_{i_\ell})$ with $n_{i_k}\in\Z$ and arbitrary
bracketing. Since we have relations $g_i^2=1$ in $\racg_\sK$, we may assume that each $n_{i_k}$ is~$1$. We refer to $\ell\ge2$ as the \emph{length} of an
iterated commutator. If an iterated commutator
$(g_{i_1},\ldots,g_{i_\ell})$ contains a commutator $(a,b)$ where
each of $a,b$ is itself a commutator, then we can remove such
$(g_{i_1},\ldots,g_{i_\ell})$ from the list of generators by
writing $(a,b)$ as a word in shorter commutators $a,b$ and
using~\eqref{hall} iteratively. We therefore obtain a generators
set for $\racg'_\sK$ consisting only of \emph{nested} iterated
commutators, i.\,e. those not containing $(a,b)$ where both $a,b$
are commutators. The next step is to use the identity
\[
  ((a,b),c)=(b,a)(c,(b,a))(a,b)
\]
and the identities~\eqref{hall} to express each nested commutator
in terms of \emph{canonical} nested commutators
$(g_{i_1},(g_{i_2},\cdots(g_{i_{\ell-2}},(g_{i_{\ell-1}},g_{i_\ell}))\cdots))$.

The most important part of the proof is to express each canonical
nested commutator in terms of canonical nested commutators in
which the generators $g_i$ appear in a specific order. This will
be done by a combination of algebraic and topological arguments
and use the specifics of the group~$\racg_\sK$.

We first prove a particular case of the statement, corresponding
to $\sK$ consisting of $m$ disjoint points. The group $\racg_\sK$
is then a free product of $m$ copies of~$\Z_2$.

\begin{lemma}\label{fpgen}
Let $G$ be a free product of $m$ copies of~$\Z_2$, given by the
presentation
\[
  G= F(g_1,\ldots,g_m)\big/ (g_i^2=1,\quad i=1,\ldots,m).
\]
Then the commutator subgroup $G'$ is a free group freely generated
by the iterated commutators of the form
\[
  (g_j,g_i),\quad (g_{k_1},(g_j,g_i)),\quad\ldots,\quad
  (g_{k_1},(g_{k_2},\cdots(g_{k_{m-2}},(g_j,g_i))\cdots)),
\]
where $k_1<k_2<\cdots<k_{\ell-2}<j>i$ and $k_s\ne i$ for any~$s$.
Here, the number of commutators of length $\ell$ is equal to
$(\ell-1)\binom m\ell$.
\end{lemma}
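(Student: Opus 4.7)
The plan is to establish the lemma in three steps: (i) $G'$ is free of rank $2^{m-1}(m-2)+1$, (ii) the listed nested commutators have that cardinality, and (iii) they generate $G'$. By the Hopfian property of free groups of finite rank (any generating subset of cardinality equal to the rank is automatically a free basis), these three facts force the listed commutators to form a free basis of $G'$.

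For (i) and (ii): by Corollary~\ref{coxfund}(d), $G' \cong \pi_1(\rk)$ where $\sK$ is the simplicial complex consisting of $m$ disjoint points. For this $\sK$ the polyhedral product $\rk = (D^1, S^0)^\sK$ coincides with the $1$-skeleton of the cube $[-1,1]^m$: a connected graph with $2^m$ vertices and $m\cdot 2^{m-1}$ edges. Hence $G'$ is free of rank
\[
1 - \chi(\rk) = m\cdot 2^{m-1} - 2^m + 1 = 2^{m-1}(m-2) + 1.
\]
A direct binomial manipulation (using $\sum_{\ell=2}^m \ell\binom{m}{\ell} = m\cdot 2^{m-1}-m$ and $\sum_{\ell=2}^m \binom{m}{\ell} = 2^m-m-1$) shows $\sum_{\ell=2}^m (\ell-1)\binom{m}{\ell}$ equals this same value, and there is a natural bijection between the listed nested commutators and the non-tree edges of $\rk$ relative to the spanning tree $T$ whose edges are exactly those of the form $(S, S\cup\{i\})$ with $i = \max(S\cup\{i\})$.

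For (iii): the loops based at $\emptyset$ around the non-tree edges of $\rk$ form a free basis of $G' = \pi_1(\rk)$. A direct computation along tree paths shows that the loop around the non-tree edge $(S, S\cup\{i\})$ equals $u\,(w^{-1}, g_i)\,u^{-1}$, where $u$ is the product (in increasing order of indices) of the generators $g_{k_s}$ with $k_s \in S\setminus\{\max S\}$ and $k_s < i$, while $w$ is the analogous product for those $k_s > i$ followed by $g_{\max S}$. Iterating the Hall identity $(ab,c) = (a,c)((a,c),b)(b,c)$ from~\eqref{hall} expands $(w^{-1}, g_i)$ as a product of canonical nested commutators of the form $(g_{k_r},(g_{k_{r+1}},\cdots,(g_j, g_i))\cdots)$; absorbing the outer conjugation by $u$ via the identity $g_k c g_k^{-1} = c \cdot (g_k, c)^{-1}$ then introduces the remaining outer brackets in the prescribed increasing order. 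This exhibits every basis loop as a product of elements of the prescribed form, so the listed nested commutators generate $G'$.

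The main obstacle is the bookkeeping in this last reduction: verifying that the commutators produced by the combined Hall expansion and conjugation-absorption are precisely the prescribed canonical nested commutators indexed by the data $(k_1<\cdots<k_{\ell-2}<j, i)$ attached to $A = S\cup\{i\}$, modulo nested commutators indexed by proper subsets of $A$ (which are handled by induction on $|A|$). In particular one must verify that no ``illegal'' commutator $(g_k, (g_j, g_{i'}))$ with $k > j$ or $k = i'$ arises, so that the canonicalization identity $((a,b),c) = (b,a)(c,(b,a))(a,b)$ from the proof of Theorem~\ref{gscox} suffices to bring each term of the expansion into the prescribed form.
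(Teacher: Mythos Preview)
Your overall architecture matches the paper's exactly: identify $G'$ with $\pi_1(\rk)$ where $\rk$ is the $1$-skeleton of the $m$-cube to see it is free of rank $(m-2)2^{m-1}+1$; verify the listed commutators have this cardinality; show they generate; and conclude via the Hopfian property of finitely generated free groups (the paper cites \cite[Theorem~2.13]{m-k-s76} for this last step).

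The divergence is only in the generation step. The paper argues purely algebraically: starting from an \emph{arbitrary} canonical nested commutator $(g_{i_1},(g_{i_2},\cdots(g_{i_{\ell-1}},g_{i_\ell})\cdots))$, it introduces the ``swap'' identity
\[
(g_q,(g_p,x))=(g_q,x)\bigl(x,(g_p,g_q)\bigr)(g_q,g_p)(x,g_p)(g_p,(g_q,x))(x,g_q)(g_p,g_q)(g_p,x)
\]
and uses it together with~\eqref{hall} and the relation $(g_i,(g_i,x))=(g_i,x)^2$ (from $g_i^2=1$) to eliminate repeated letters, move the largest index to the penultimate slot, and sort the outer indices increasingly. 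Your route instead starts from the explicit free basis of $\pi_1(\rk)$ given by the non-tree loops for your spanning tree; the bijection you describe between these loops and the listed commutators is correct, and your formula $u(w^{-1},g_i)u^{-1}$ for the loop word is correct. However, the expansion you propose via $(ab,c)=(a,c)\bigl((a,c),b\bigr)(b,c)$ produces intermediate factors of the shape $\bigl((g_j,g_i),b\bigr)$ with $b$ a product of generators, and absorbing the outer $u$-conjugation via $g_k c g_k = c\,(c,g_k)$ produces factors $(c,g_k)$ rather than $(g_k,c)$; bringing all of these into the prescribed left-nested, index-ordered form is exactly where the swap identity above is needed. So the ``bookkeeping'' you flag as the main obstacle is not merely clerical---it is essentially the same commutator calculus the paper carries out, transplanted to your setting. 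Your induction on~$|A|$ is the right organizing device and the argument does go through, but it buys no real simplification over the paper's direct algebraic reduction.
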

\begin{proof}
The statement is clear for $m=1$ (then $G=\Z_2$) and for $m=2$
(then $G=\Z_2\mathbin{\star}\Z_2$ and $G'\cong\Z$ with generator
$(g_2,g_1)$). For $m=3$, the lemma says that the commutator
subgroup of $G=\Z_2\mathbin{\star}\Z_2\mathbin{\star}\Z_2$ is
freely generated by
\[
  (g_2,g_1),\;(g_3,g_1),\;(g_3,g_2),\;(g_1,(g_3,g_2)),\;(g_2,(g_3,g_1)).
\]
This is easy to see geometrically, by identifying $\racg'_\sK$
with $\pi_1(\rk)$. In our case $\rk$ is the 1-skeleton of the
3-cube (see Example~\ref{ppexa}.4). We have
$(g_1,(g_3,g_2))=g_1(g_2,g_3)g_1(g_3,g_2)$,
$(g_2,(g_3,g_1))=g_2(g_1,g_3)g_2(g_3,g_1)$, and the elements
$(g_2,g_1)$, $(g_3,g_1)$, $(g_3,g_2)$, $g_1(g_2,g_3)g_1$,
$g_2(g_1,g_3)g_2$ correspond to the loops around five different
faces of $\rk$, which freely generate its fundamental group.

The general statement for arbitrary $m$ can be proved by a similar
topological argument, by identifying $G'$ with the fundamental
group of the 1-skeleton of the $m$-dimensional cube
(see~\cite[Proposition~3.6]{staf15}). However, we record an
algebraic argument for subsequent use. We have the commutator
identity
\begin{equation}\label{swap}
  \!(g_q,(g_p,x))\!=\!(g_q,x)(x,(g_p,g_q))(g_q,g_p)(x,g_p)(g_p,(g_q,x))
  (x,g_q)(g_p,g_q)(g_p,x),
\end{equation}
which can be deduced from the Hall--Witt identity, or checked
directly. Note that if $x$ is a canonical nested commutator, then
the factor $(x,(g_p,g_q))$ can be expressed via nested commutators
as in the beginning of the proof of Theorem~\ref{gscox}. In this
case we can use~\eqref{swap} to swap $g_p$ and $g_q$ in the
commutator $(g_q,(g_p,x))$, by expressing it through
$(g_p,(g_q,x))$ and canonical nested commutators of lesser length.

In the subsequent arguments, we shall swap elements in an iterated
commutator. Such a swap will change the element of the group
represented by the commutator, but the two elements will always
differ by a product of commutators of lesser length, as in the
case of $(g_p,(g_q,x))$ and $(g_q,(g_p,x))$ in the argument above.
If we want to swap elements $g_p$ and $g_q$ inside a canonical nested commutator of 
the form $(\cdots,(g_p,(g_q,x))\cdots)$, where $x$ is a smaller canonical nested commutator,
then we need to use the first identity of~\eqref{hall} alongside with~\eqref{swap}. Note that
if both $b$ and $c$ in the identity $(a,bc)=(a,c)(a,b)((a,b),c)$ are canonical nested commutators,
then $(a,c)$ and $(a,b)$ are also canonical nested commutators, while $((a,b),c)=(b,a)c^{-1}(a,b)c$
is a product of nested commutators of lesser length.

Therefore, using~\eqref{swap} together with the
identities~\eqref{hall} we can change the order of any two
generators in a commutator
$(g_{i_1},\cdots(g_{i_{\ell-2}},(g_{i_{\ell-1}},g_{i_\ell}))\cdots)$
within the positions $i_1$ to~$i_{\ell-2}$. We first use this
observation to eliminate commutators
$(g_{i_1},\cdots(g_{i_{\ell-2}},(g_{i_{\ell-1}},g_{i_\ell}))\cdots)$
which contain a pair of repeating generators~$g_i$ (i.\,e. have
$i_p=i_q$ for some $p\ne q$). Namely, if the repeating pair occurs
within the positions $i_1$ to~$i_{\ell-2}$, then we
use~\eqref{swap} to reduce the commutator to the form
$(\cdots(g_i,(g_i,x))\cdots)$, where
$x=(g_{i_{\ell-1}},g_{i_\ell})$, and use the relation
$(g_i,(g_i,x))=(g_i,x)(g_i,x)$ to reduce the commutator to a
product of commutators of lesser length. (Note that here we use
the relation $g_i^2=1$ in~$G$.) If one of the repeating generators
is on the position $i_{\ell-1}$ or $i_\ell$, then we
use~\eqref{swap} to reduce the commutator to the form
$(\cdots(g_i,(g_i,g_j))\cdots)$ and use the relation
$(g_i,(g_i,g_j))=(g_i,g_j)(g_i,g_j)$. As a result, we obtain a
generator set for $G'$ consisting of commutators
$(g_{i_1},\cdots(g_{i_{\ell-2}},(g_{i_{\ell-1}},g_{i_\ell}))\cdots)$
with all different~$g_i$. This already shows that $G'$ is finitely
generated.

Now we use \eqref{swap} to put the generators $g_i$ in
$(g_{i_1},\cdots(g_{i_{\ell-2}},(g_{i_{\ell-1}},g_{i_\ell}))\cdots)$
in an order. Choose the generator $g_{i_k}$ with the largest index
$i_k$. If it is not within the last three positions, then we use
\eqref{swap} to move it to the third-to-last position. The case
$m=3$ considered above shows that the commutator $(g_j,(g_i,g_k))$
can be expressed through $(g_i,(g_j,g_k)$, $(g_k,(g_j,g_i))$ and
commutators of lesser length. This allows us to move the generator
$g_{i_k}$ with the largest index $i_k$ in
$(g_{i_1},\cdots(g_{i_{\ell-2}},(g_{i_{\ell-1}},g_{i_\ell}))\cdots)$
to the second-to-last position, and set $j=i_k$. Then we
use~\eqref{swap} and~\eqref{hall} to put the first $\ell-2$
generators in the commutator in an increasing order, and redefine
their indices as $k_1<\cdots<k_{\ell-2}$. As a result, we obtain a
generator set for $G'$ consisting of commutators of the required
form $(g_{k_1},(g_{k_2},\cdots(g_{k_{\ell-2}},(g_j,g_i))\cdots))$
where $k_1<k_2<\cdots<k_{\ell-2}<j>i$ and $k_s\ne i$ for any~$s$.

It remains to show that the constructed generating set of $G'$ is
free. This generating set consists of
$N=\sum_{\ell=2}^m(\ell-1)\binom m\ell=(m-2)2^{m-1}+1$
commutators. On the other hand, $G'\cong\pi_1(\rk)$, where $\rk$
the 1-skeleton of the $m$-cube. Then $\rk$ is homotopy equivalent
to a wedge of $N$ circles (as easy to see inductively or by
computing the Euler characteristic), and $\pi_1(\rk)$ is a free
group of rank $N$. We therefore have a system of $N$ generators in
a free group of rank~$N$. This system must be free, by the
classical result that a free group of finite rank cannot be
isomorphic to its proper quotient group,
see~\cite[Theorem~2.13]{m-k-s76}.
\end{proof}

Now we resume the proof of Theorem~\ref{gscox}. We shall exclude
some commutators
$(g_{k_1},(g_{k_2},\cdots(g_{k_{\ell-2}},(g_j,g_i))\cdots))$ from
the generating set using the new commutativity relations.

First assume that $j$ and $i$ are in the same connected component
of the complex $\sK_{\{k_1,\ldots,k_{\ell-2},j,i\}}$. We shall
show that the corresponding commutator
$(g_{k_1},(g_{k_2},\cdots(g_{k_{\ell-2}},(g_j,g_i))\cdots))$ can
be excluded from the generating set. We choose a path from $i$ to
$j$, i.\,e. choose vertices $i_1,\ldots,i_n$ from
$k_1,\ldots,k_{\ell-2}$ with the property that the edges
$\{i,i_1\},\{i_{1},i_{2}\},\ldots,\{i_{n-1},i_{n}\},\{i_n,j\}$ are
all in~$\sK$. We proceed by induction on the length of the path.
The induction starts from the commutator $(g_j,g_i)=1$
corresponding to a one-edge path $\{i,j\}\in\sK$. Now assume that
the path consists of $n+1$ edges. Using the relation~\eqref{swap}
we can move the elements $g_{i_1},g_{i_2},\ldots,g_{i_n}$ in
$(g_{k_1},(g_{k_2},\cdots(g_{k_{\ell-2}},(g_j,g_i))\cdots))$ to
the right and restrict ourselves to the commutator
$(g_{i_1},(g_{i_2},\cdots(g_{i_n},(g_j,g_i))\cdots))$. Observe
that in the presence of the commutation relation $(g_p,g_q)=1$ the
identity~\eqref{swap} does not contain the factor $(x,(g_p,g_q))$
and therefore it allows us to change the order of $g_p$ and $g_q$
without assuming $x$ to be a commutator. We therefore can convert
the commutator
$(g_{i_1},(g_{i_2},\cdots(g_{i_n},(g_j,g_i))\cdots))$ (with
$\{i_n,j\}\in\sK$) to the commutator
$(g_j,(g_{i_1},\cdots(g_{i_{n-1}},(g_{i_n},g_i))\cdots))$. The
latter contains a commutator
$(g_{i_1},\cdots(g_{i_{n-1}},(g_{i_n},g_i))\cdots)$ corresponding
to a shorter path $\{i,i_1,\ldots,i_n\}$. By inductive hypothesis,
it can be expressed through commutators of lesser length, and
therefore excluded from the set of generators.

We therefore obtain a generator set for $\racg_\sK'$ consisting of
nested commutators
$(g_{k_1},\cdots(g_{k_{\ell-2}},(g_j,g_i))\cdots)$ with $j$ and
$i$ in different connected components of the complex
$\sK_{\{k_1,\ldots,k_{\ell-2},j,i\}}$. Consider commutators
$(g_{k_1},\cdots(g_{k_{\ell-2}},(g_j,g_{i_1}))\cdots)$ and
$(g_{k'_1},\cdots(g_{k'_{\ell-2}},(g_j,g_{i_2}))\cdots)$ such that
$\{k_1,\ldots,k_{\ell-2},j,i_1\}=\{k'_1,\ldots,k'_{\ell-2},j,i_2\}$
and $i_1,i_2$ lie in the same connected component of
$\sK_{\{k_1,\ldots,k_{\ell-2},j,i_1\}}$ which is different from
the connected component containing~$j$. We claim that one of these
commutators can be expressed through the other and commutators of
lesser length. To see this, we argue as in the previous paragraph,
by considering a path in $\sK_{\{k_1,\ldots,k_{\ell-2},j,i_1\}}$
between $i_1$ and~$i_2$, and then reducing it inductively to a
one-edge path. This leaves us with the pair of commutators
$(g_{i_2},(g_j,g_{i_1}))$ and $(g_{i_1},(g_j,g_{i_2}))$ where
$\{i_1,i_2\}\in\sK$, $\{i_1,j\}\notin\sK$, $\{i_2,j\}\notin\sK$.
The claim then follows easily from the relation
$(g_{i_1},g_{i_2})=1$ (compare the case $m=3$ of
Lemma~\ref{fpgen}).

Thus, to enumerate independent commutators, we use the convention
of writing $(g_{k_1},\cdots(g_{k_{\ell-2}},(g_j,g_i))\cdots)$
where $i$ is the smallest vertex in its connected component within
$\sK_{\{k_1,\ldots,k_{\ell-2},j,i\}}$. This leaves us with
precisely the set of commutators~\eqref{commuset}. It remains to
show that this generating set is minimal. For this we once again
recall that $\racg_\sK'=\pi_1(\rk)$. The first homology group
$H_1(\rk)$ is isomorphic to $\racg_\sK'/\racg_\sK''$, where
$\racg_\sK''$ is the commutator subgroup of~$\racg_\sK'$. On the
other hand, we have
\[
  H_1(\rk)\cong\bigoplus_{J\subset[m]}\widetilde H_0(\sK_J)
\]
by Theorem~\ref{homrk}. Hence, the number of generators in the
abelian group $H_1(\rk)\cong \racg_\sK'/\racg_\sK''$ is
$\sum_{J\subset[m]}\rank\widetilde H_0(\sK_J)$. The latter number
agrees with the number of iterated commutators in the
set~\eqref{commuset}, as $\rank\widetilde H_0(\sK)$ is one less
the number of connected components of~$\sK$.
\end{proof}

\begin{example}\

\textbf{1.} Let $\mathcal K$ be the simplicial complex
$\quad\begin{picture}(10,0) \put(0,0){\circle*{1}}
\put(5,0){\circle*{1}} \put(5,5){\circle*{1}}
\put(0,0){\line(1,0){5}} \put(5,0){\line(0,1){5}}
\put(-2.5,-1){\scriptsize 1} \put(6.3,-1){\scriptsize 2}
\put(6.3,4){\scriptsize 3} \put(10,0){\circle*{1}}
\put(11.3,-1){\scriptsize 4}
\end{picture}\quad$ on four vertices. Then the commutator subgroup $\racg'_\sK$ is free, and
Theorem~\ref{gscox} gives the following free generators:
\begin{gather*}
  (g_3,g_1),\;(g_4,g_1),\;(g_4,g_2),\;(g_4,g_3),\\
  (g_2,(g_4,g_1)),\;(g_3,(g_4,g_1)),\;(g_1,(g_4,g_3)),\;(g_3,(g_4,g_2)),\\
  (g_2,(g_3,(g_4,g_1))).
\end{gather*}

\textbf{2.} Let $\sK$ be an $m$-cycle with $m\ge4$ vertices. Then
$\sK^1$ is not a chordal graph, so the group $\racg'_\sK$ is not
free. One can see that $\rk$ is an orientable surface of genus
$(m-4)2^{m-3}+1$ (see Example~\ref{mcyc1}), so
$\racg'_\sK\cong\pi_1(\rk)$ is a one-relator group.

When $m=4$, we get a 2-torus as $\rk$, and Theorem~\ref{gscox}
gives the generators $a_1=(g_3,g_1)$ and $b_1=(g_4,g_2)$. The
single relation is obviously $(a_1,b_1)=1$. For $m\ge5$ we do not
know the explicit form of the single relation in the surface group
$\racg'_\sK\cong\pi_1(\rk)$ in terms of the generators provided by
Theorem~\ref{gscox}. Compare~\cite{very}, where the corresponding
problem is studied for the commutator subalgebra of the graded Lie
algebra from Theorem~\ref{multgen}.
\end{example}

\end{document}